\setlist[enumerate]{leftmargin=1.5em}
\setlist[itemize]{leftmargin=1.5em}
\definecolor{green}{rgb}{0,0.5,0} 
\newcommand{\Red}[1]{\begingroup\color{red} #1\endgroup} 
\newcommand{\Blue}[1]{\begingroup\color{blue} #1\endgroup} 
\newcommand{\Green}[1]{\begingroup\color{green} #1\endgroup}
\newtheorem{thm}{Theorem}[section]
\newtheorem{lem}[thm]{Lemma}
\newtheorem{prop}[thm]{Proposition}
\newtheorem{defn}[thm]{Definition}
\theoremstyle{definition}
\theoremstyle{remark}
\newtheorem{rmk}[thm]{Remark}
\newtheorem*{note}{Note}
\numberwithin{equation}{section}
\newcommand{\nrm}[1]{\Vert#1\Vert}
\newcommand{\tld}[1]{\widetilde{#1}}
\newcommand{\br}[1]{\overline{#1}}
\newcommand{\nnrm}[1]{{\vert\kern-0.25ex\vert\kern-0.25ex\vert #1 
		\vert\kern-0.25ex\vert\kern-0.25ex\vert}}
\newcommand{\lap}{\Delta}
\newcommand{\rd}{\partial}
\newcommand{\nb}{\nabla}
\newcommand{\ift}{\infty}
\newcommand{\alp}{\alpha}
\newcommand{\gmm}{\gamma}
\newcommand{\dlt}{\delta}
\newcommand{\eps}{\epsilon}
\newcommand{\veps}{\varepsilon}
\newcommand{\kpp}{\kappa}
\newcommand{\lmb}{\lambda}
\newcommand{\Sgm}{\Sigma}
\newcommand{\tht}{\theta}
\newcommand{\vtht}{\vartheta}
\newcommand{\omg}{\omega}
\newcommand{\Omg}{\Omega}
\newcommand{\bbN}{\mathbb N}
\newcommand{\bbR}{\mathbb R}
\newcommand{\bbS}{\mathbb S}
\begin{document}
	
	\bibliographystyle{plain}
	\title{On global regularity of some bi-rotational Euler flows in $\bbR^{4}$}
	\author{Kyudong Choi\thanks{Department of Mathematical Sciences, Ulsan National Institute of Science and Technology, 50 UNIST-gil, Eonyang-eup, Ulju-gun, Ulsan 44919, Republic of Korea. Email: kchoi@unist.ac.kr}\and In-Jee Jeong\thanks{Department of Mathematical Sciences and RIM, Seoul National University, 1 Gwanak-ro, Gwanak-gu, Seoul 08826, Republic of Korea. Email: injee$ \_ $j@snu.ac.kr}\and 
		Deokwoo Lim\thanks{The Research Institute of Basic Sciences, Seoul National University, 1 Gwanak-ro, Gwanak-gu, Seoul 08826, Republic of Korea. Email: dwlim95@snu.ac.kr} }
	
	\date\today
	\maketitle
	
	\begin{abstract}
		In this paper, we consider 
		incompressible Euler flows in $ \bbR^{4} $ under bi-rotational symmetry, namely solutions that are invariant under rotations in $\bbR^{4}$ fixing either the first two or last two axes. 
		With the additional swirl-free assumption, our first main result gives local wellposedness of Yudovich-type solutions, extending the work of 
		Danchin [Uspekhi Mat. Nauk 62(2007), no.3, 73–94] for axisymmetric flows in $\bbR^{3}$. 
		The second main result establishes global wellposedness under additional decay conditions near the axes and at infinity. This in particular gives global regularity of $C^{\infty}$ smooth and decaying Euler flows in $\bbR^{4}$ subject to bi-rotational symmetry without swirl.
	\end{abstract}
	
	\renewcommand{\thefootnote}{\fnsymbol{footnote}}
	\footnotetext{\emph{2020 AMS Mathematics Subject Classification:} 76B47, 35Q35}
	\footnotetext{\emph{Key words : Incompressible Euler equations; bi-rotational symmetry; vorticity; local regularity; 
			Biot--Savart law; global regularity; BKM criterion} }
	\renewcommand{\thefootnote}{\arabic{footnote}}
	
	\section{Introduction}\label{sec_intro}
	
	
	We consider the Cauchy problem for the incompressible Euler equations in $\bbR^n$: 
	\begin{equation}\label{eq_Eulereq}
		\left\{
		\begin{aligned}
			\rd_{t}u+(u\cdot\nb)u+\nb p&=0,\\
			\nb\cdot u&=0,\\
			u|_{t=0}&=u_{0},
		\end{aligned}
		\right.
	\end{equation}
	where $ u=(u^{1},\cdots,u^{n}) : [0,\ift)\times\bbR^{n}\to\bbR^{n} $ and $ p : [0,\ift)\times\bbR^{n}\to\bbR $. The system \eqref{eq_Eulereq} describes the  motion of volume-preserving and inviscid fluid flows, and has been studied extensively in the two- and three-dimensional cases, namely when $n = 2, 3$. For $n \ge 3$, global regularity or finite time singularity for $C^{\infty}$ smooth and decaying solutions to \eqref{eq_Eulereq} is an outstanding open problem. 
	
	The goal of this paper is to study the four-dimensional case $(n=4)$ under the \textit{bi-rotational} symmetry. The main motivation is to uncover new geometric scenarios that could contribute to the understanding of global regularity or the formation of finite-time singularities for the Euler equations.
	
	\subsection{The system for bi-rotationally symmetric solutions}\label{subsec_birotsys}

	The three-dimensional Euler equations have been extensively studied under the axisymmetry assumption, which means that the solution is invariant under all rotations fixing one axis in $\bbR^{3}$. Since the Euler equations preserve rotational symmetries of the solution, it suffices to assume axisymmetry for the initial data. The key simplification comes from the fact that the number of independent spatial variables reduces from three to two. 
	
	While higher dimensional Euler equations ($n\ge4$) has been studied under the axisymmetry assumption as well, in this paper we shall investigate the bi-rotational symmetry, or $O(2)\times O(2)$ symmetry, which is specific to dimensions 4 or higher. As the name suggests, this symmetry refers to solutions which are invariant under all rotations fixing either the first two or the last two variables. To handle such solutions, it is natural to consider the bi-polar coordinates in $\bbR^4$: the coordinate system $ (r,\tht,s,\phi) $ with $x_1= r\cos(\tht), \quad x_2 = r\sin(\tht), \quad x_3 = s\cos(\tht)$ and $x_4 = s\sin(\phi).$ 
	
	Under the bi-rotational symmetry, the velocity vector can be written as \begin{equation}\label{eq_bi-rotational}
		\begin{split}
			u = u^{r} e^{r} + u^{\tht} e^{\tht} + u^{s}e^{s} + u^{\phi} e^{\phi},
		\end{split}
	\end{equation} where the functions $u^{*}$ depend only on $(t,r,s)$ and the unit vectors $e^{*}$ are defined by \begin{equation*}
		\begin{split}
			e^{r} = \begin{pmatrix}
				\cos\tht \\
				\sin\tht \\
				0 \\
				0
			\end{pmatrix}, \quad  e^{\tht} = \begin{pmatrix}
				-\sin\tht\\
				\cos\tht\\
				0 \\
				0
			\end{pmatrix}, \quad e^{s} = \begin{pmatrix}
				0\\
				0\\
				\cos\phi\\
				\sin\phi 
			\end{pmatrix}, \quad e^{\phi} = \begin{pmatrix}
				0 \\
				0\\ 
				-\sin\phi \\
				\cos\phi
			\end{pmatrix} .
		\end{split}
	\end{equation*} Similarly, the pressure is given by $p = q(t,r,s)$ for some scalar-valued function $q$. We then compute that \eqref{eq_Eulereq} turns into \begin{equation}  \label{eq:4D-Euler-polar-radial}
		\left\{
		\begin{aligned} 
			&\rd_t u^{r} + (u^{r} \rd_{r} +u^{s} \rd_{s}  ) u^{r} = \frac{(u^{\tht})^2}{r} - \rd_{r}q \\
			&\rd_t u^{s} + (u^{r} \rd_{r} +u^{s} \rd_{s}  ) u^{s} = \frac{(u^{\phi})^2}{s} - \rd_{s}q 
		\end{aligned}
		\right.
	\end{equation} and \begin{equation}\label{eq:4D-Euler-polar-angular}
		\left\{
		\begin{aligned}
			& \rd_t u^{\tht} + (u^{r} \rd_{r} +u^{s} \rd_{s}  ) u^{\tht} = - \frac{u^{r}}{r}u^{\tht}  \\
			& \rd_t u^{\phi} +  (u^{r} \rd_{r} +u^{s} \rd_{s}  ) u^{\phi} = - \frac{u^{s}}{s} u^{\phi}
		\end{aligned}
		\right. 
	\end{equation} supplemented with the divergence-free condition \begin{equation}\label{eq:4D-div}
		\begin{split}
			\rd_{r}u^{r} + \frac{u^{r}}{r} +\rd_{s}u^{s} + \frac{u^{s}}{s} = 0.  
		\end{split}
	\end{equation} The system \eqref{eq:4D-Euler-polar-radial}--\eqref{eq:4D-div} describes the 4D Euler equations under the bi-rotational symmetry. We shall mainly work with its vorticity form (to avoid dealing with the pressure), obtained by introducing the scalar ``vorticity'' \begin{equation}\label{eq:4D-vort}
		\begin{split}
			w := \rd_{r}u^{s} - \rd_{s}u^{r}.
		\end{split}
	\end{equation} From \eqref{eq:4D-Euler-polar-radial}, we have that $w$ satisfies \begin{equation}\label{eq:4D-swirl}
		\begin{split}
			\rd_t w +  (u^{r} \rd_{r} +u^{s} \rd_{s}  ) w =  \left( \frac{u^{r}}{r} + \frac{u^{s}}{s}  \right) w + 2 \left( \frac{u^{\phi}}{s} \rd_{r}u^{\phi} - \frac{u^{\tht}}{r}\rd_{s}u^{\tht} \right). 
		\end{split}
	\end{equation} We may introduce $\psi$ satisfying \begin{equation}\label{eq:4D-psi}
		\begin{split}
			w = \rd_{r} \left( \frac{1}{r} \rd_{r}(r\psi) \right)+ \rd_{s} \left( \frac{1}{s} \rd_{s}(s\psi) \right)
		\end{split}
	\end{equation} subject to the boundary condition $\psi(0,\cdot) = \psi(\cdot,0) = 0$. The velocity $(u^{r},u^{s})$ is obtained from $\psi$ by \begin{equation}  \label{eq:4D-vel-stream} 
		\begin{aligned} 
			u^{r} &= -\frac{1}{s}\rd_{s}(s\psi) = - \frac{\psi}{s} - \rd_{s}\psi ,\quad u^{s} = \frac{1}{r}\rd_{r}(r\psi) = \frac{\psi}{r} + \rd_{r}\psi .
		\end{aligned} 
	\end{equation} 
	
		The above system contains a few sub-systems; for instance, if one takes $u^{\phi} = 0$ (or similarly $u^{\tht}=0$), then it reduces to a system of two scalar quantities which resembles the 3D axisymmetric Euler equations. Moreover, if one takes 
		\begin{equation}\label{eq_noswirl}
			u^{\tht} = u^{\phi} = 0,
		\end{equation}
		then the system becomes \begin{equation}\label{eq:4D-adv-no-swirl}
			\begin{split}
				D_t \left( \frac{w}{rs} \right) & = 0, \qquad 	D_t = \rd_t + u^{r}\rd_{r}+u^{s}\rd_{s},
			\end{split}
		\end{equation} which shall be referred to as the bi-rotational Euler equation \textit{without swirl}. This is equivalent with
		\begin{equation}\label{eq_Vorticityform}
			\begin{split}
				\rd_{t}w+(u^{r}\rd_{r}+u^{s}\rd_{s})
				w&=w\bigg(\frac{u^{r}}{r}+\frac{u^{s}}{s}\bigg),
			\end{split}
		\end{equation} 
		which is simply \eqref{eq:4D-swirl} with the no-swirl condition $ u^{\tht}=u^{\phi}\equiv0 $.
		
		\subsection{Main results}\label{subsec_mainresults}
		
		In this paper, we prove local and global regularity results for the Cauchy problem to the bi-rotational Euler equations \eqref{eq_Vorticityform} without swirl. 
		Before stating the theorems, let us provide a quick review on the well-posedness theory of the incompressible Euler equations. More refined statements which naturally motivate the results in this work will be given in the following section. 
		
		The incompressible Euler equations \eqref{eq_Eulereq} in $\bbR^n$ are locally well-posed in H\"older spaces $C^{k,\alpha}$ with $k\ge 1, 0<\alpha<1$ (together with some decay assumption at infinity) and Sobolev spaces $H^s$ with $s>\frac{n}{2}+1$, in terms of $u$. While not much is known in general about long-time behavior of the solutions, the Beale--Kato--Majda criterion  {(\cite{BKM,KaPo})} states that these local solutions lose regularity at some time $T^*>0$ if and only if \begin{equation*}
			\begin{split}
				\lim_{t\rightarrow T^*}\int_0^{t} \nrm{(\nabla\times u)(\tau)}_{L^\infty}d\tau = +\infty. 
			\end{split}
		\end{equation*} 
		
		In particular, the bi-rotational Euler equations (being a special case of the four-dimensional Euler equations) are locally well-posed in such function spaces. We are interested in utilizing the special nature of the bi-rotational system to obtain well-posedness (existence and uniqueness) in the largest possible \textit{borderline} spaces, which are characterized by having a scale-invariant norm: $\nrm{\cdot}$ satisfying $\nrm{(\nabla\times u)(\lmb \cdot)} = \nrm{\nabla\times u}$ for any $\lmb>0$. In this direction, we have a local well-posedness for Yudovich type initial data, extending earlier works of Ukhovsksii--Yudovich \cite{UY} and Danchin \cite{Danaxi} in the three-dimensional axisymmetric Euler: 
		\begin{thm}\label{thm_localreg}
			Assume that $w_{0}$ satisfy $w_{0} \in (L^{4,1}\cap L^\infty)(\bbR^4)$ and 
			$$ {(1+
				r+s)\frac{w_{0}}{rs}
				\in L^{4,1}(\bbR^4).}$$ 
			Then, there exist $T>0$ and a unique solution to \eqref{eq_Eulereq} satisfying $w \in L^\infty(0,T
			;(L^{4,1}\cap L^\infty)(\bbR^4))$ and 
			{$$(1+
				r+s)\frac{w}{rs}
				\in L^\infty(0,T
				;L^{4,1}(\bbR^4)).$$ 
			}
		\end{thm}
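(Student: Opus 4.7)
My plan is to follow the Yudovich--Ukhovskii--Danchin scheme: regularize the initial data, solve the Cauchy problem classically for the mollified sequence, derive uniform a priori bounds in the target norms, pass to the limit, and close with a Yudovich-type uniqueness argument. The central structural fact is that in the no-swirl regime \eqref{eq:4D-adv-no-swirl} says $w/(rs)$ is transported by the planar flow on $(r,s)$-space, while \eqref{eq:4D-div} rewrites as $\rd_r(rsu^r)+\rd_s(rsu^s)=0$, so the flow $\Phi_t$ preserves the measure $rs\,dr\,ds$. This immediately propagates $\|w/(rs)\|_{L^p(\bbR^4)}$ for every $p$, and by rearrangement-invariance also $\|w/(rs)\|_{L^{4,1}(\bbR^4)}$. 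For the weighted factor, $D_t(1+r+s)=u^r+u^s$ combined with transport of $w/(rs)$ yields
\[
\|(1+r+s)w/(rs)\|_{L^{4,1}}(t)\le\|(1+r+s)w_0/(rs)\|_{L^{4,1}}+\int_0^t\|u\|_{L^\infty}\,\|w/(rs)\|_{L^{4,1}}\,d\tau,
\]
while the stretching equation \eqref{eq_Vorticityform} gives
\[
\|w(t)\|_{L^\infty\cap L^{4,1}}\le\|w_0\|_{L^\infty\cap L^{4,1}}\,\exp\!\int_0^t\|u^r/r+u^s/s\|_{L^\infty}\,d\tau.
\]

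The whole argument thus reduces to two Biot--Savart estimates, recovering $(u^r,u^s)$ from $w$ via \eqref{eq:4D-psi}--\eqref{eq:4D-vel-stream}. The first is
\[
\|u\|_{L^\infty}\aleq\|(1+r+s)w/(rs)\|_{L^{4,1}(\bbR^4)},
\]
which I would obtain by writing the bi-rotational 4D Biot--Savart kernel and splitting into near-axis, bulk, and far-field regions: the factor $1/(rs)$ neutralizes the coordinate singularities on the two axes $\{r=0\}$ and $\{s=0\}$, the factor $1+r+s$ supplies far-field decay, and the critical estimate that convolution with $|x|^{-3}$ maps $L^{4,1}(\bbR^4)$ into $L^\infty$ (via Young--O'Neil) closes the loop. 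The second ingredient is the log-Lipschitz estimate
\[
|u(x)-u(y)|\aleq\bigl(\|w\|_{L^\infty}+\|w\|_{L^{4,1}}\bigr)\,|x-y|\bigl(1+\log^{+}(1/|x-y|)\bigr),
\]
which after a standard interpolation controls $\|u^r/r+u^s/s\|_{L^\infty}$ by $\log(e+\|w\|_{L^\infty})$ times lower-order norms, and also produces the unique bi-Lipschitz Lagrangian flow $\Phi_t$ with an Osgood modulus of continuity, needed for both the transport identities and the uniqueness step.

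Combining the a priori estimates, a Gronwall-type argument on $[0,T]$ closes for $T$ depending only on the initial norms, yielding uniform bounds along the regularized sequence. Existence then follows by weak compactness and a standard passage to the limit in the vorticity formulation. Uniqueness follows Yudovich's Lagrangian argument: the $L^2$-type difference of two flow maps with the same initial data satisfies an Osgood-type inequality driven by the log-Lipschitz modulus, which forces them to coincide. The main technical obstacle I expect is the weighted Biot--Savart estimate with the exact weight $(1+r+s)/(rs)$ matching the hypothesis: the presence of two coordinate axes $\{r=0\}$ and $\{s=0\}$, absent from Danchin's one-axis setting, forces a finer multi-scale kernel decomposition than in the 3D axisymmetric case, and the precise matching of this weight against the critical Lorentz norm $L^{4,1}$ will require careful bookkeeping.
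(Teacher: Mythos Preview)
Your overall strategy (regularize, derive a priori bounds, pass to the limit, Yudovich uniqueness) matches the paper's, and your observations that $w/(rs)$ is transported and that the flow preserves $rs\,dr\,ds$ are correct and useful. However, there is a genuine gap in how you propose to control the stretching factor $\|u^{r}/r + u^{s}/s\|_{L^\infty}$.

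You write that the log-Lipschitz estimate ``after a standard interpolation controls $\|u^r/r+u^s/s\|_{L^\infty}$ by $\log(e+\|w\|_{L^\infty})$ times lower-order norms.'' This does not hold. Since $u^{r}(0,s)=0$, the log-Lipschitz bound only yields
\[
\Big|\frac{u^{r}(r,s)}{r}\Big| = \Big|\frac{u^{r}(r,s)-u^{r}(0,s)}{r}\Big| \lesssim \bigl(\|w\|_{L^\infty}+\|w\|_{L^{4,1}}\bigr)\bigl(1+\log^{+}(1/r)\bigr),
\]
which is \emph{unbounded} as $r\to 0^{+}$; there is no interpolation that removes this logarithm using only $w\in L^\infty\cap L^{4,1}$. (This is exactly the phenomenon discussed in the paper's review of the 3D axisymmetric case: merely bounded vorticity does not bound $u^{r}/r$.) The paper's closure instead rests on a \emph{direct} kernel estimate (Lemma~\ref{lem:vel-estimate1}) obtained by a reflection trick in the angular variable $\bar\tht$:
\[
\|r^{-1}u^{r}\|_{L^\infty}\le C\|r^{-1}w\|_{L^{4,1}},\qquad
\|s^{-1}u^{s}\|_{L^\infty}\le C\|s^{-1}w\|_{L^{4,1}}.
\]
With this in hand, the equations $D_t(w/r)=(u^{s}/s)(w/r)$ and $D_t(w/s)=(u^{r}/r)(w/s)$ yield the quadratic differential inequality
\[
\frac{d}{dt}\bigl(\|r^{-1}w\|_{L^{4,1}}+\|s^{-1}w\|_{L^{4,1}}\bigr)\le C\bigl(\|r^{-1}w\|_{L^{4,1}}+\|s^{-1}w\|_{L^{4,1}}\bigr)^{2},
\]
which closes on a short interval and then feeds back into the estimates for $\|w\|_{L^\infty}$ and $\|w\|_{L^{4,1}}$. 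Your first velocity bound $\|u\|_{L^\infty}\lesssim\|(1+r+s)w/(rs)\|_{L^{4,1}}$ is also not the one the paper uses (the paper has the simpler $\|u\|_{L^\infty}\le C\|w\|_{L^{4,1}}$), but that point is secondary; the decisive missing ingredient is the pointwise bound on $u^{r}/r$ in terms of $\|w/r\|_{L^{4,1}}$.
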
 
		In the statement above, $L^{p,q}$ refers to Lorentz spaces which are defined in the following section. It is important that the \textit{uniqueness} statement is not merely among the solutions satisfying bi-rotational symmetry but is for all possible solutions of the original Euler equations \eqref{eq_Eulereq}. 
		\begin{rmk}\label{rmk_blowupcrit}
			The local-in-time solution $ w $ from Theorem \ref{thm_localreg} loses its regularity at finite time $ T^{\ast}>0 $ if and only if we have
			\begin{equation}\label{eq_blowupcrit}
				\int_{0}^{T_{\ast}}\bigg[\bigg\|\frac{w(t)}{r}\bigg\|_{L^{4,1}(\bbR^{4})}+\bigg\|\frac{w(t)}{s}\bigg\|_{L^{4,1}(\bbR^{4})}\bigg]dt=+\ift.
			\end{equation}
		\end{rmk}
		Furthermore, under additional assumptions on the initial data, one can guarantee global in time well-posedness. 
		\begin{thm}\label{thm_globreg}
			Let $w_{0}$ 
			from Theorem \ref{thm_localreg} additionally satisfy 
			$$ (1+r+s)
			\frac{w_{0}}{rs}\in L^{\ift}
			(\bbR^{4}) \qquad \mbox{and} \qquad  {(1+r^{2}+s^{2})\frac{w_{0}}{rs}\in L^{1}(\bbR^{4}).}
			$$ 
			Then the corresponding solution $w$ of \eqref{eq_Vorticityform} is global in time and satisfies the estimate
			\begin{align}
				\nrm{w(t)}_{L^{\ift}(\bbR^{4})} \le C \exp( Ct ) ,\label{eq_wtgrowth}\\
				\bigg\|\frac{w(t)}{r}\bigg\|_{L^{4,1}(\bbR^{4})}\leq C\exp(Ct), \label{eq_woverrexp}\\
				\bigg\|\frac{w(t)}{s}\bigg\|_{L^{4,1}(\bbR^{4})}\leq C\exp(Ct), \label{eq_woversexp}
			\end{align}
			for some constant $C>0$ depending  {only} on 
			{$$ \nrm{w_{0}}_{L^{\ift}(\bbR^{4})},\quad 
				{\bigg\|(1+r+s)\frac{w_{0}}{rs}\bigg\|_{L^{\ift}(\bbR^{4})},\quad
					\text{and}\quad \bigg\|(1+r^{2}+s^{2})\frac{w_{0}}{rs}\bigg\|_{L^{1}(\bbR^{4})}.}
				$$}
		\end{thm}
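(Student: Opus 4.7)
\emph{Plan.} The entire estimate is closed by a Gronwall argument for $B(t) := \int_{0}^{t}\|u(\tau)\|_{L^{\infty}(\bbR^{4})}\,d\tau$. The key structural input is the pointwise transport identity $D_{t}f=0$ for $f := w/(rs)$, which is exactly \eqref{eq:4D-adv-no-swirl} in the no-swirl regime. Combined with a Biot--Savart-type bound of the form
$$\|u\|_{L^{\infty}(\bbR^{4})}\lesssim\|w/r\|_{L^{4,1}(\bbR^{4})}+\|w/s\|_{L^{4,1}(\bbR^{4})},$$
which underlies the blow-up criterion stated in Remark \ref{rmk_blowupcrit} and is presumably a product of the local theory, the task reduces to showing both $L^{4,1}$ norms on the right grow at most linearly in $B(t)$. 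By bi-rotational symmetry the $4$D Lagrangian flow $\Phi_{t}$ descends to a flow $(r_{0},s_{0})\mapsto(r(t,y),s(t,y))$ on the quarter-plane $\{r,s>0\}$ governed by $\dot{r}=u^{r},\dot{s}=u^{s}$ which preserves the measure $rs\,dr\,ds$ (since the $4$D flow is volume-preserving). The pointwise bound $|u^{r}|,|u^{s}|\leq\|u\|_{L^{\infty}(\bbR^{4})}$ gives the displacement estimate $|r(t,y)-r_{0}|+|s(t,y)-s_{0}|\leq 2B(t)$ along every trajectory.

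\emph{Growth bounds for $w/r$ and $w/s$.} Using $w/r=sf$ together with the Lagrangian conservation $f(\Phi_{t}(y),t)=f_{0}(y)$ and the displacement bound,
$$|(w/r)|(\Phi_{t}(y),t)=s(t,y)|f_{0}(y)|\leq(s_{0}+B(t))|f_{0}(y)|\leq\|sf_{0}\|_{L^{\infty}}+B(t)\|f_{0}\|_{L^{\infty}}.$$
Since $s, 1\leq 1+r+s$ we have $\|sf_{0}\|_{L^{\infty}}+\|f_{0}\|_{L^{\infty}}\leq 2\|(1+r+s)w_{0}/(rs)\|_{L^{\infty}}=:2M$, so $\|w(t)/r\|_{L^{\infty}}\leq 2M(1+B(t))$. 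For the $L^{1}$ side, the measure-preserving change of variables $x=\Phi_{t}(y)$ yields
$$\|w(t)/r\|_{L^{1}}=\int s(x)|f|(x,t)\,dx=\int s(t,y)|f_{0}(y)|\,dy\leq\|sf_{0}\|_{L^{1}}+B(t)\|f_{0}\|_{L^{1}},$$
and $\|sf_{0}\|_{L^{1}}+\|f_{0}\|_{L^{1}}\lesssim\|(1+r^{2}+s^{2})w_{0}/(rs)\|_{L^{1}}=:L$ via $2s\leq 1+s^{2}$. Real interpolation $(L^{1},L^{\infty})_{3/4,1}\hookrightarrow L^{4,1}$ then gives
$$\|w(t)/r\|_{L^{4,1}}\leq C\|w(t)/r\|_{L^{1}}^{1/4}\|w(t)/r\|_{L^{\infty}}^{3/4}\leq C(M,L)(1+B(t)).$$
The symmetric argument with $w/s=rf$ produces the same bound for $\|w(t)/s\|_{L^{4,1}}$.

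\emph{Gronwall closure and global existence.} Substituting the previous bound into the Biot--Savart inequality gives $B'(t)\leq C(1+B(t))$, hence $B(t)\leq C\exp(Ct)$, and plugging this back into the interpolation estimate proves \eqref{eq_woverrexp} and \eqref{eq_woversexp}. For \eqref{eq_wtgrowth}, the pointwise identity $|w|(\Phi_{t}(y),t)=r(t,y)s(t,y)|f_{0}(y)|$ together with $r(t,y)s(t,y)\leq r_{0}s_{0}+B(t)(r_{0}+s_{0})+B(t)^{2}$ gives
$$\|w(t)\|_{L^{\infty}}\leq\|w_{0}\|_{L^{\infty}}+B(t)\|(r+s)w_{0}/(rs)\|_{L^{\infty}}+B(t)^{2}\|w_{0}/(rs)\|_{L^{\infty}}\leq C(1+B(t))^{2}\leq C\exp(Ct).$$
Finally, since $\int_{0}^{t}[\|w/r\|_{L^{4,1}}+\|w/s\|_{L^{4,1}}]\,d\tau$ is then finite on every bounded interval, Remark \ref{rmk_blowupcrit} extends the local solution of Theorem \ref{thm_localreg} globally. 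The main obstacle is the Biot--Savart estimate itself: the naive bound $\|u\|_{L^{\infty}}\lesssim\|w\|_{L^{4,1}}$ is insufficient, because $\|w\|_{L^{4,1}}$ only admits an $O((1+B)^{2})$ bound in this framework, which would produce finite-time blow-up of the Gronwall ODE; the crucial refinement with the $1/r$ and $1/s$ weights must exploit the axis vanishing of $u^{r}$ on $\{r=0\}$ and $u^{s}$ on $\{s=0\}$ inside the bi-rotational Biot--Savart kernel for \eqref{eq:4D-psi}.
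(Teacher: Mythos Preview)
Your Lagrangian estimates are all correct and mirror the paper's Lemmas~\ref{lem_wtest} and~\ref{lem_wL1}. The genuine gap is the Biot--Savart bound you assume:
\[
\|u\|_{L^{\infty}(\bbR^{4})}\lesssim\|w/r\|_{L^{4,1}(\bbR^{4})}+\|w/s\|_{L^{4,1}(\bbR^{4})}.
\]
This inequality is \emph{false}, and a scaling check rules it out immediately. Under $u\mapsto u(\lambda\,\cdot)$, $w\mapsto\lambda w(\lambda\,\cdot)$, the left side is invariant while $\|w/r\|_{L^{4,1}}$ picks up a factor $\lambda$ (since $w/r\mapsto \lambda^{2}(w/r)(\lambda\,\cdot)$ and $\|f(\lambda\,\cdot)\|_{L^{4,1}(\bbR^{4})}=\lambda^{-1}\|f\|_{L^{4,1}}$). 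Sending $\lambda\to 0$ gives a contradiction. Nor does this bound underlie Remark~\ref{rmk_blowupcrit}: the blow-up criterion comes from Lemma~\ref{lem:vel-estimate1}, which controls the \emph{stretching factors} $\|u^{r}/r\|_{L^{\infty}}$ and $\|u^{s}/s\|_{L^{\infty}}$ by $\|w/r\|_{L^{4,1}}$ and $\|w/s\|_{L^{4,1}}$, not $\|u\|_{L^{\infty}}$ itself.

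What the paper actually proves and uses (Proposition~\ref{prop_urLiftest}) is the scale-invariant estimate
\[
\|u^{r}\|_{L^{\infty}},\ \|u^{s}\|_{L^{\infty}}\ \lesssim\ \Big[\|rw/s\|_{L^{1}}+\|sw/r\|_{L^{1}}\Big]^{1/2}\,\|w/(rs)\|_{L^{\infty}}^{1/2}.
\]
The second factor is conserved, and your own Lagrangian argument (applied to $r^{2}f$ and $s^{2}f$ instead of $sf$) gives $\|rw/s\|_{L^{1}}+\|sw/r\|_{L^{1}}\lesssim (1+B(t))^{2}$, so the square root yields $(1+B(t))$ and Gronwall closes exactly as you intended. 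Proving this velocity estimate is the heart of the matter: it requires a delicate case analysis of the bi-rotational Biot--Savart kernel \eqref{eq_urkernel} on the unit circle $r^{2}+s^{2}=1$ (after exploiting scale-invariance), with separate treatment near each axis and at infinity. This is where the bulk of the work in Section~\ref{sec_GWP} lies, and it is not supplied by the local theory.
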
	
		
		\begin{rmk}[Global well-posedness for smooth and decaying data]\label{rmk_globaldata}
			If $ u_{0}\in H^{s}(\bbR^{4}) $ for some $ s>5 $ satisfies \eqref{eq_bi-rotational} and \eqref{eq_noswirl}, then the condition  $  {(1+r+s)}
			w_{0}/(rs)
			\in L^{\ift}(\bbR^{4}) $ 
			from Theorem \ref{thm_globreg} is automatically satisfied. We will prove this in Proposition \ref{prop_Liftwoverrs} from the Appendix \ref{seca:vorticity}. 
			The other condition  
			$ (1+r^{2}+s^{2})w_{0}/(rs) 
			\in L^{1}(\bbR^{4})$ is satisfied when $\nb u_{0}$ has some decay at infinity. Therefore, we conclude the following: sufficiently smooth and decaying initial data of \eqref{eq_Eulereq} have globally unique smooth solutions under bi-rotational symmetry without swirl.
		\end{rmk}
		
		\subsection{Key ideas}\label{subsec_keyideas}
		
		\textbf{Support growth of a compactly supported vorticity.} For the moment, let us consider an easier setting where the initial vorticity $ w_{0} $ is compactly supported away from the axes $ \{r=0\} $ and $ \{s=0\} $. Then using the conservation of $ L^{p} $-norm ($ p\in[1,\ift] $) of $ w/(rs) $ in time, we can estimate the $ L^{\ift} $-norm of the corresponding local-in-time solution $ w $ from Theorem \ref{thm_localreg} as
		$$ \nrm{w(t)}_{L^{\ift}(\bbR^{4})}\leq R(t)S(t)\bigg\|\frac{w_{0}}{rs}\bigg\|_{L^{\ift}(\bbR^{4})}\lesssim[R(t)+S(t)]^{2}\bigg\|\frac{w_{0}}{rs}\bigg\|_{L^{\ift}(\bbR^{4})},\quad t\in[0,T_{\text{max}}), $$
		where $ R(t) $ and $ S(t) $ are given as
		$$ R(t)=R(0)+\int_{0}^{t}\nrm{u^{r}(\tau)}_{L^{\ift}(\bbR^{4})}d\tau,\quad S(t)=S(0)+\int_{0}^{t}\nrm{u^{s}(\tau)}_{L^{\ift}(\bbR^{4})}d\tau. $$
		These represent the highest distance where a point contained in the support of $ w_{0} $ can move along the flow in $ r $ and $ s $-direction, respectively. 
		This narrows our goal down to obtaining estimates of $ \nrm{u^{r}(t)}_{L^{\ift}(\bbR^{4})} $ and $ \nrm{u^{s}(t)}_{L^{\ift}(\bbR^{4})} $. This can be done by the following time-independent estimate (Proposition \ref{prop_urLiftest}):
		\begin{equation}\label{eq_timeindepest}
			\nrm{u^{r}}_{L^{\ift}(\bbR^{4})}, \, \nrm{u^{s}}_{L^{\ift}(\bbR^{4})}\lesssim\bigg\|(r^{2}+s^{2})\frac{w}{rs}\bigg\|_{L^{1}(\bbR^{4})}^{1/2}
			\cdot\bigg\|\frac{w}{rs}\bigg\|_{L^{\ift}(\bbR^{4})}^{1/2}.
		\end{equation}
		Such an estimate is obtained by analyzing Biot--Savart kernels of $ u^{r} $ and $ u^{s} $, which have symmetry relation. 
		In addition, to prove the above estimate, due to the scale-invariance of the Biot--Savart formula with respect to the scaling $ u\mapsto u(\lmb\cdot) $ and $ w\mapsto \lmb w(\lmb\cdot) $ for any $ \lmb>0 $, it is sufficient enough to show
		$$ \sup_{\substack{r,s\geq0, \\ r^{2}+s^{2}=1}}|u^{r}(r,s)|, \sup_{\substack{r,s\geq0, \\ r^{2}+s^{2}=1}}|u^{s}(r,s)|\lesssim\bigg\|(r^{2}+s^{2})\frac{w}{rs}\bigg\|_{L^{1}(\bbR^{4})}^{1/2}\cdot\bigg\|\frac{w}{rs}\bigg\|_{L^{\ift}(\bbR^{4})}^{1/2}. $$
		This idea using scaling was originally from Feng--Sverak \cite{FeSv} in the 3D axisymmetric Euler without swirl. Once the estimate \eqref{eq_timeindepest} is shown, then we can get
		\begin{equation*}
			\begin{split}
				\frac{d}{dt}[R(t)+S(t)]&=\nrm{u^{r}(t)}_{L^{\ift}(\bbR^{4})}+\nrm{u^{s}(t)}_{L^{\ift}(\bbR^{4})}\lesssim_{w_{0}}[R(t)^{2}+S(t)^{2}]^{1/2}\leq R(t)+S(t).
			\end{split}
		\end{equation*}
		This gives the term $ R(t)+S(t) $ 
		an exponential upper bound with a constant term added.
		
		Here, let us emphasize an additional difficulty arising in the proof of \eqref{eq_timeindepest}, compared to the axisymmetric case. In that situation, the velocity is a function of $r,z$, and using scaling \textit{and} translational invariance, it suffices to prove the bound \eqref{eq_timeindepest} only in the special case $(r,z) = (1,0)$. In the current setup of bi-rotational symmetry, there are serious technical difficulties arising not only from $(r,s) = (1,0)$ or $(0,1)$ but also from the whole circle $r^2+s^2=1$. 
		This is the reason why we need the control of $(r^2+s^2)w/(rs) \in L^{1}$ (and not just $rsw \in L^{1}$). 
	
		Furthermore, in the axisymmetric case, the Biot--Savart kernel can be simplified into a composition between a single-variable function and some scalar function with other simple terms multiplied. This idea was originally from Feng--Sverak \cite{FeSv} and it makes the estimate of the Biot--Savart kernel much easier as the process narrows down to estimating this single-variable function. However, in our bi-rotational symmetry setting, we cannot simplify the Biot--Savart kernel via some one variable function, so we had to directly estimate the Biot--Savart kernel to achieve our goal.
		
		\medskip
		
		\noindent \textbf{Removing the compact support assumption.} The case where the vorticity $ w $ is not necessarily compactly supported can be handled in a similar way as the above easier case. The support size $ R(t)+S(t) $ is replaced by the following ``length'' function $ L $:
		$$ L(t)=L(0)+\int_{0}^{t}[\nrm{u^{r}(\tau)}_{L^{\ift}(\bbR^{4})}+\nrm{u^{s}(\tau)}_{L^{\ift}(\bbR^{4})}]d\tau. $$
		Then instead of compactly supportedness, if we assume decay assumptions near rotational axes and at infinity, then we can obtain estimates of $ \nrm{w}_{L^{\ift}(\bbR^{4})} $ and the first term on the right-hand side of \eqref{eq_timeindepest}:
		\begin{equation*}
			\nrm{w(t)}_{L^{\ift}(\bbR^{4})}\lesssim_{w_{0}}L(t)^{2}, \qquad 					\bigg\|(r^{2}+s^{2})\frac{w}{rs}\bigg\|_{L^{1}(\bbR^{4})}\lesssim_{w_{0}}L(t)^{2}. 
		\end{equation*} 
		Using these and \eqref{eq_timeindepest}, we can get an exponential bound of $ L(t) $ by the following inequality,
		$$ \frac{d}{dt}L(t)=\nrm{u^{r}(t)}_{L^{\ift}(\bbR^{4})}+\nrm{u^{s}(t)}_{L^{\ift}(\bbR^{4})}\lesssim_{w_{0}}L(t), $$
		which immediately implies that $ \nrm{w}_{L^{\ift}(\bbR^{4})} $ is exponentially bounded in time.
		
		\subsection{{Literature review}}\label{subsec_lit}
		
		To put our results in context, we review relevant literature, starting from the axisymmetric Euler equations.

		\subsubsection{Axisymmetric solutions in three dimensions}
		
		In this section, we briefly review the theory of axisymmetric solutions to the 3D Euler equations, which naturally motivates the results in the current paper. We fix the domain to be $\bbR^3$ and use cylindrical coordinates $(r,\tht,z)$.  
			Under axisymmery, 3D Euler equations reduce to the system involving the axial components of the velocity and its curl, denoted by $u^\tht$ and $\omg^\tht =\rd_zu^r - \rd_ru^z $ (\cite{MB}): \begin{equation}  \label{eq:3D-axisym-Euler}
				\left\{
				\begin{aligned} 
					&(\rd_t  + u^r\rd_r + u^z \rd_z ) (ru^\tht )=0 , \\
					&(\rd_t  + u^r\rd_r + u^z \rd_z ) (\frac{\omg^\tht}{r} )= \frac{1}{r^4} \rd_z [(ru^\tht)^2],
				\end{aligned}
				\right.
			\end{equation} supplemented with the divergence-free condition $\rd_ru^r + \frac{u^r}{r} + \rd_z u^z = 0.$

			\medskip 
			
			\noindent \textit{Global well-posedness in the no-swirl case}. 
			While the problem of singularity formation for smooth solutions to \eqref{eq:3D-axisym-Euler} is still open, global regularity in the no-swirl case $u^\tht=0$ is well-known. In this case, the system is simply \begin{equation}\label{eq:3D-axisym-Euler-no-swirl}
				\begin{split}
					(\rd_t  + u^r\rd_r + u^z \rd_z ) (\frac{\omg^\tht}{r} )= 0, 
				\end{split}
			\end{equation} so that any $L^{p,q}$-norm (with respect to the Lebesgue measure on $\bbR^3$) of $\frac{\omg^\tht}{r} $ is conserved in time. This equation is analogous to the 2D vorticity equation, which is simply the transport equation for the vorticity. In the 2D case, Yudovich theory \cite{Y1} gives global well-posedness for vorticities belonging to $L^\infty(0,\ift
		; L^\infty \cap L^1(\bbR^2))$. Ideally, one would like to prove well-posedness for \eqref{eq:3D-axisym-Euler-no-swirl} with merely bounded and decaying $\omg^\tht$ as well, but there are subtle differences due to the factor $r^{-1}$.
			
			Regarding \eqref{eq:3D-axisym-Euler-no-swirl}, existence of global solutions was first proved in Ukhovsksii--Yudovich \cite{UY} under the assumptions $\omg^\tht_0, \frac{\omg^\tht_0}{r} \in L^2\cap L^\infty$. Later, global well-posedness in the case $\omg_0^\tht \in H^s$ with $s>\frac{3}{2}$ was obtained in \cite{Majda,Raymond,SY}. Then, Danchin proved global regularity under the assumption $\omg^\tht_0 \in L^\infty \cap L^{3,1}$, $\frac{\omg^\tht_0}{r} \in L^{3,1}$ in \cite{Danaxi}, and this seems to be optimal. 
			
			To briefly explain the difference with the 2D Euler equation, let us return to the equation for $\omg^\tht$ and observe the estimate \begin{equation*}
				\begin{split}
					\frac{d}{dt}\nrm{\omg^\tht}_{L^\infty} \le \nrm{r^{-1}u^r}_{L^\infty} \nrm{\omg^\tht}_{L^\infty}.
				\end{split}
			\end{equation*} However, having $\omg^\tht$ in $L^\infty \cap L^1$ does not guarantee boundedness of $r^{-1}u^r$. On the other hand, estimate due to Shirota--Yanagisawa (\cite{SY}) and Danchin (\cite[Lemma 2]{Danaxi}) gives  \begin{equation*}
				\begin{split}
					\frac{|u^r(r,z)|}{r} \le C\int_{\mathbb{R}^3} \frac{1}{|x-\bar{x}|^2} \frac{|\omg^\tht(\bar{x})|}{\bar{r}} d\bar{x} \le C\nrm{r^{-1}\omg^\tht}_{L^{3,1}}= C\nrm{r^{-1}\omg^\tht_{0}}_{L^{3,1}},
				\end{split}
			\end{equation*} which then allows us to get \begin{equation*}
				\begin{split}
					\frac{d}{dt}\nrm{\omg^\tht}_{L^\infty} \le C\nrm{r^{-1}\omg^\tht_0}_{L^{3,1}} \nrm{\omg^\tht}_{L^\infty}.
				\end{split}
			\end{equation*} This guarantees that the vorticity remains uniformly bounded until any finite time, and this gives not only uniqueness but also propagates higher H\"older or Sobolev regularity of the vorticity.
			
			We note that the condition $\frac{\omg^\tht_0}{r} \in L^{3,1}(\bbR^3)$ is automatic (via Sobolev embedding) when the initial vorticity belongs to either $H^s(\bbR^3)$ with $s>\frac{3}{2}$ or $C^\alp(\bbR^3)$ with $\alp>\frac{1}{3}$. We note that global well-posedness for smooth and axisymmetric vortex patch solutions to \eqref{eq:3D-axisym-Euler-no-swirl} was proved in \cite{Hu} for initial data satisfying $|\omg_0^\tht(r,z)|\lesssim |r|^\beta$ for $\beta>\frac{1}{3}$. Moreover, global well-posedness of \eqref{eq:3D-axisym-Euler-no-swirl} in certain critical Besov spaces has been established in \cite{AHK,Cozzi,WuG}. 
			
			On the other hand, optimality of $\frac{\omg^\tht_0}{r} \in L^{3,1}(\bbR^3)$ can be seen by directly using the kernel expression for $u^r/r$: for any $\eps>0$, it is possible to find $\omg^\tht_0 \in L^1 \cap L^\infty (\bbR^3)$ such that $\frac{\omg^\tht_0}{r} \in L^{3,1+\eps}(\bbR^3)$ but the corresponding velocity satisfies $u^{r}_{0}/r \notin L^{\infty}(\bbR^3)$. Indeed, strong illposedness of \eqref{eq:3D-axisym-Euler-no-swirl} was obtained in critical Lorentz spaces in \cite{JK-axi} based on this observation. Earlier work of Bourgain--Li \cite{BL3D} obtained strong illposedness for \eqref{eq:3D-axisym-Euler-no-swirl} in the critical $H^{3/2}(\bbR^3)$ space.

			\medskip 
			
			\noindent \textit{Finite-time singularity formation for $C^\alpha$ data}. The aforementioned works may give the impression that the axisymmetric no-swirl equation is globally well-posed in any function space in which the equation is locally well-posed. Essentially\footnote{Strictly speaking, it is not clear if the BKM criterion is valid for all spaces in which the Euler equations are well-posed (see \cite{Danaxi}).}, Danchin's result shows that this is correct once $r^{-1}\omg_0^\tht \in L^{3,1}$. However, recall that for $\omg_0^\tht \in C^\alpha$, $\omg^\tht_0 \in L^{3,1}$ is guaranteed only when $\alpha>\frac{1}{3}$. Remarkably, recent work of Elgindi \cite{Elgindi-3D} (see also \cite{EGM}) proved that finite-time blow up occurs for the no-swirl equation with initial data $\omg_0^\tht \in C^\alp$ with $0<\alp$ small. Although singularity formation for the axisymmetric no-swirl system is limited to vorticities less regular than $C^{\frac{1}{3}}$, the situation could be different in other geometries. It would be interesting to investigate Elgindi type of singularity formation for the bi-rotational geometry as well. However, in the current work, the initial data considered are regular enough near the axis (e.g. $w \in L^{4,1}(\bbR^4)$) which rules out this kind of singularity formation. Inspired by this work, Chen--Hou \cite{ChenHou} showed finite-time singularity formation of some solution of 3D axisymmetric Euler equations with swirl in a bounded cylinder with $ C^{\alp} $-initial vorticity. 

			\subsubsection{Axisymmetric Euler equations in higher dimensions}
			

			As we mentioned earlier, axisymmetric solutions can be considered for the Euler equations defined in higher dimensions as well. Interestingly, it turns out that as dimension becomes larger, there seems to be a higher chance of singularity formation, even for smooth and decaying initial data.
			
			Drivas--Elgindi \cite{DE} considered pressureless Euler equations in each dimension. They constructed a sequence of solutions for each dimension that is initially smooth and blows up in finite time. Then, they sent the dimension to infinity to show that there is a finite-time blowup in the ``infinite dimension'', in some sense. This lead them to propose an open question about whether there would be a finite-time blowup of initially smooth, axisymmetric, and swirl-free solution of the Euler equations in some high dimension. Miller \cite{Miller} also raised the same open question. In addition, Hou--Zhang \cite{HouZhang22P1,HouZhang22P2} investigated potential finite-time singularity of the axisymmetric Euler equations without swirl where the initial vorticity is in $ C^{\alp} $ for a large range of $ \alp $ in both 3D and higher dimensions.
			
			On the other hand, there are several works regarding global existence of locally well-posed smooth 
			solutions (\cite{Miller}) with additional decay assumptions at the origin and at infinity. 
			In our previous work \cite{CJLglobal22}, it was shown that such decay assumptions are sufficient for global regularity to hold in $ \bbR^{4} $. However, the decay assumption near the symmetry axis used in \cite{CJLglobal22} is not automatically satisfied for smooth data in $\bbR^{4}$, in stark contrast to our global wellposedness result. In a more recent work by the third author \cite{Limglobal23}, it was shown that for any $ d\geq3 $, a \textit{single-signed} vorticity with compact support and fast decay at the origin is globally regular. Indeed, an explicit estimate of the size of vorticity confinement can be obtained. 
			
			\subsubsection{The lake equation and the vortex membrane equation}
			
			There is another equation that is closely related to the bi-rotational Euler equations, namely,
			the lake equation in $ \Omg\subset\bbR^{2} $;
			\begin{equation}\label{eq_lakeeq}
				\begin{split}
					\rd_{t} {\bigg(\frac{\omg}{b}\bigg)}+U\cdot\nb {\bigg(\frac{\omg}{b}\bigg)}&=0\quad\text{in}\quad\Omg,\\
					\nb\cdot(bU)&=0\quad\text{in}\quad\Omg,\\
					U\cdot n&=0\quad\text{in}\quad\rd\Omg,\\
					\omg&=-\rd_{2}U^{1}+\rd_{1}U^{2},
				\end{split}
			\end{equation}
			with a nonnegative depth function $ b : \Omg\to[0,\ift) $. 
			Note that when the domain $ \Omg $ is the half plane $ \lbrace (r,z) : r\geq0, z\in\bbR\rbrace $ with the depth function chosen as $ b(r,z)=r $, then this corresponds to the 3D axisymmetric Euler equations without swirl. Additionally, if we take $ \Omg $ as the first quadrant $ \lbrace(r,s) : r,s\geq0\rbrace $ and $ b $ as $ b(r,s)=rs $, then this is exactly the bi-rotational Euler equations that we are considering in this paper. There are several results about global existence of Yudovich-type solutions in bounded domains (\cite{LOT96,LNP14}).

			
			Also, works of Khesin--Yang \cite{KhYa} and Yang \cite{Yang} presented that for integers $m,l\in\bbN$, the Euler equations in $\bbR^{m+l+2}$ with sphere product $\bbS^{m}\times\bbS^{l}$-symmetry (that is, when the velocity $u$ and the pressure $p$ depend only on distances $|x|, |y|$ with $x\in\bbR^{m+1}, y\in\bbR^{l+1}$ to the origin) can be written as the lake equation \eqref{eq_lakeeq} in the first quadrant $ \lbrace(|x|,|y|)\rbrace $ with $b(|x|,|y|)=|x|^{m}|y|^{l}$. We can consider the Euler equations with such symmetry as the bi-rotational Euler equations in higher dimensions $ \bbR^{ {m+l+2}} $, with the equation
			\begin{equation}\label{eq_birothighdim}
				\rd_{t}\bigg(\frac{w}{r^{m}s^{l}}\bigg)+(u^{r}\rd_{r}+u^{s}\rd_{s})\bigg(\frac{w}{r^{m}s^{l}}\bigg)=0,\quad w=-\rd_{s}u^{r}+\rd_{r}u^{s}.
			\end{equation}
			In addition, their works show that the motion of a point vortex of a point $(a,b)$ in the first quadrant for the equation \eqref{eq_birothighdim} can be regarded as \textit{a sphere product vortex membrane} under \textit{skew-mean-curvature flow}, and 
			they provided explicit point-vortex type solutions which exist for all time or shows finite-time blowup with dependence on $m, l$. Here, a \textit{sphere product vortex membrane} is a singular vorticity supported on the sphere product $\bbS^{m}(a)\times\bbS^{l}(b)$ with radii $a,b\geq0$, and the \textit{skew-mean-curvature flow} is described by the following equation:
			$$ \rd_{t}q=-J(H(q)),\quad q\in\Sgm, $$
			where $\Sgm^{n}\subset\bbR^{n+2}$ is a codimension 2 compact oriented submanifold(membrane) in $\bbR^{n+2}$, $H(q)$ is the mean curvature vector at the point $q\in\Sgm$, and $J$ is the operator of positive $\pi/2$ rotation in the 2-dimensional normal space $N_{q}\Sgm$ to $\Sgm$ at $q$. This is a natural generalization from the case $ n=1 $, which is the binormal equation:
			\begin{equation}\label{eq_binormal}
				\rd_{t}\gmm=\kpp\textbf{b},
			\end{equation}
			where $ \gmm $ is a curve in $ \bbR^{3} $, $ \kpp $ is the curvature of $ \gmm $ at some point, and $ \textbf{b} $ is the binormal vector at that point. Indeed, when $ n=1 $, the mean curvature vector of $ \gmm $ at a point is given as $ H=\kpp\textbf{n} $, with $ \textbf{n} $ being the outward normal vector at that point, so we have \eqref{eq_binormal}: $ \rd_{t}\gmm=-J(\kpp\textbf{n})=\kpp\textbf{t}\times\textbf{n}=\kpp\textbf{b} $ ($ \textbf{t} $ is the tangent vector). However, it is not clear whether this dynamics of point vortices can be well-approximated by that of smooth solutions. 

			\subsubsection{Comparison with 2D and 3D Euler equations}

			\begin{figure}
				\centering
				\includegraphics[scale=0.8]{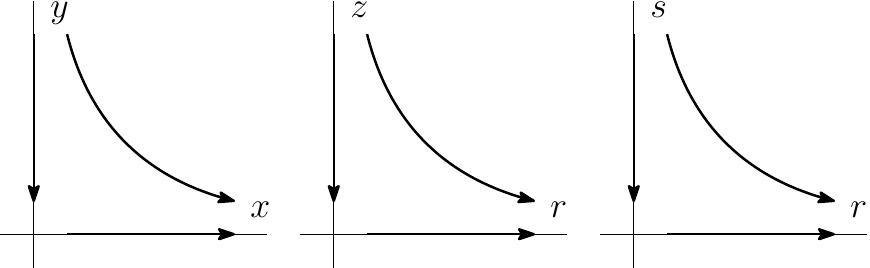} 
				\caption{Overall flow pattern for signed vorticity: $\bbR^{2}$ under odd-odd symmetry (left), $\bbR^{3}$ under axisymmetry and odd symmetry in $z$ (center), $\bbR^{4}$ under bi-rotational symmetry (right).} \label{figure:flow}
			\end{figure}

			The bi-rotational Euler system can be considered in the first quadrant $ \lbrace(r,s) : r,s\geq0\rbrace $ with $ r=0 $ and $ s=0 $ as rotational axes. This is similar with 
			2D Euler with odd-odd symmetry and 3D axisymmetric, swirl-free Euler with odd symmetry in $ z $, because under such symmetry assumptions, the domain again reduces to the first quadrant. Furthermore, under the additional assumption that the vorticity is \textit{signed}, then in all three cases, it can be shown that in an averaged sense, the overall fluid flow is directed southeast; see Figure \ref{figure:flow}. 	 In 2D Euler with odd-odd symmetry,  Iftimie--Sideris--Gamblin \cite{ISG99} proved that the horizontal center of mass 
			grows linearly in time, using single-signed vorticity. Similarly, in 3D axisymmetric Euler without swirl and with $ z $-odd symmetry, the first and the second author \cite{CJ-axi} showed sublinear growth of impulse of a single-signed vorticity in time. This was enhanced by Gustafson--Miller--Tsai \cite{GMT2023}. 
			One of the main difference from axisymmetric Euler in 4D is that there seems to be no time-conserved quantity corresponding to angular momentum.
			
			We would like to point out an additional difference of bi-rotational symmetry to axisymmetry: the variables $r$ and $s$ are symmetric, as in the case of 2D Euler. Therefore, one may consider solutions which satisfy an additional odd symmetry assumption across the diagonal line $\{ r = s \}$. If one furthermore considers signed vorticity (namely, $w \ge 0$ in the region $\{ s > r\}$), then the corresponding flow pattern looks like the one depicted in Figure \ref{figure:flow2}.

			\begin{figure}
				\centering
				\includegraphics[scale=0.8]{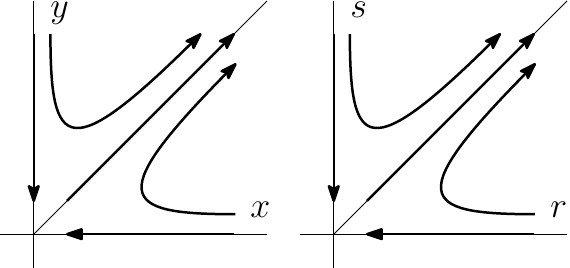} 
				\caption{Overall flow pattern for signed vorticity under an additional odd symmetry} \label{figure:flow2}
			\end{figure}

			\subsection{Outline of the paper} \label{subsec:outline}
			
			The rest of the paper is organized as follows. In section \ref{sec:prelim}, we provide basic definition and properties of the Lorentz space. Then, we introduce the vorticity tensor in the bi-rotational geometry and derive the Biot--Savart law of the bi-rotational Euler flow. In section \ref{sec_LWP}, we conduct estimates of the Biot--Savart kernel that was obtained in section \ref{sec:prelim} to show Theorem \ref{thm_localreg}, the local regularity of $ w $. In section \ref{sec_GWP}, the last section of this paper, we prove Theorem \ref{thm_globreg}, the global regularity of $ w $, by doing finer estimate of the Biot--Savart kernel.
			
			\section{Preliminaries}\label{sec:prelim}

			In this section, after giving a brief review of Lorentz spaces, we derive the Biot--Savart law for the bi-rotational Euler equations. 
			
			\subsection{Lorentz spaces}\label{subsec:Lorentz}
			
			We briefly review the definition of Lorentz spaces 
			and a few basic properties; a classical reference is \cite{GrafakosFourier}. 
			\begin{defn}\label{def:Lorentz}
				Given a function $f:\bbR^n\rightarrow\bbR$, we define $f^*:\bbR_+\rightarrow\bbR_+$ by $f^*(t) = \inf\{ t>0: |\{ x \in\bbR^n : |f(x)|>\lmb \}|\le t \}.$ We then say $f \in L^{p,q}$ for $1\le p,q \le \infty $ if the quantity \begin{equation*}
					\begin{split}
						\nrm{f}_{L^{p,q}} = \begin{cases}
							\left( \int_0^\infty (t^{\frac{1}{p}}f^*(t))^q \frac{dt}{t} \right)^{\frac{1}{q}}, \quad & q<\infty,\\
							\sup_{t>0} \, t^{\frac{1}{p}}f^*(t), \quad & q = \infty. 
						\end{cases}
					\end{split}
				\end{equation*} is finite. We write $L^{p,q}$ as the set of all measurable functions with $\nrm{f}_{L^{p,q}}<\infty$. 
			\end{defn}
			Although the spaces can be defined even when either $p<1$ or $q<1$, we shall not be concerned with such cases. In the following we shall list a few properties which will be useful. The proofs can be found in \cite[Section 1.4]{GrafakosFourier} and the references therein. 
			\begin{prop}
				The spaces $L^{p,q}$ satisfy the following properties: \begin{itemize}
					\item The space $L^{p,p}$ coincides with the usual $L^p$ space. 
					\item The functional $\nrm{\cdot}_{L^{p,q}}$ is a quasi-norm; that is, $\nrm{f+g}_{L^{p,q}}  \le C_{p,q} \left( \nrm{f}_{L^{p,q}}+\nrm{g}_{L^{p,q}} \right).$
					\item With $1/p+1/p^* = 1$ and $1/q + 1/q^* = 1$, we have $(L^{p,q})^* = L^{p^*,q^*}$ for any $p>1$ and $1\le q<+\infty$. 
					\item We have the H\"older's inequality $\nrm{fg}_{L^{r,s}} \le C_{p,q,s_1,s_2}\nrm{f}_{L^{p,s_1}} \nrm{g}_{L^{q,s_2}}$ for any $1\le p,q,r,s_1,s_2\le+\infty$ satisfying $\frac{1}{p} + \frac{1}{q} = \frac{1}{r},\,  \frac{1}{s_1} + \frac{1}{s_2} = \frac{1}{s}. $
				\end{itemize}
			\end{prop}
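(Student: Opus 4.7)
The plan is to verify each of the four listed properties of the Lorentz spaces $L^{p,q}$ in turn, using as main tools the distribution function $d_f(\lambda) = |\{x : |f(x)| > \lambda\}|$, the non-increasing rearrangement $f^*$, and standard layer-cake type identities. All of the assertions are classical and can be found in Grafakos \cite{GrafakosFourier}; the role of the proof here is essentially to recall the structural reason each is true, with the duality claim being the only point requiring nontrivial additional work.

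For the identification $L^{p,p} = L^p$, I would start from the layer-cake formula $\int_{\bbR^n}|f|^p\,dx = p\int_0^\infty \lambda^{p-1} d_f(\lambda)\,d\lambda$ and change variables via $\lambda = f^*(t)$ (handling flat portions of $f^*$ with the usual care) to rewrite this as $\int_0^\infty (f^*(t))^p\,dt$, which agrees with the Lorentz quasi-norm raised to $p$ when $q = p$. For the quasi-norm property, the key pointwise inequality is $(f+g)^*(t) \le f^*(t/2) + g^*(t/2)$, a direct consequence of the subadditivity $d_{f+g}(\lambda_1+\lambda_2) \le d_f(\lambda_1) + d_g(\lambda_2)$; taking the $L^q((0,\infty),\,dt/t)$-norm of $t^{1/p}$ times both sides and performing a rescaling recovers the claim with the explicit constant $C_{p,q} = 2^{1/p}$.

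For Hölder's inequality I would combine the Hardy--Littlewood rearrangement bound $\int |fg|\,dx \le \int_0^\infty f^*(t)g^*(t)\,dt$ with the pointwise factorization $t^{1/r}(fg)^*(t) \le (t^{1/p}f^*(t))\,(t^{1/q}g^*(t))$ and then apply ordinary Hölder on $(0,\infty)$ with measure $dt/t$ and matched exponents $s_1, s_2$. The duality statement $(L^{p,q})^* = L^{p^*,q^*}$ is the most delicate part and will be the main obstacle: the quasi-norm defined directly via $f^*$ fails to satisfy the triangle inequality for most pairs $(p,q)$, so before invoking Hahn--Banach one must replace $f^*$ by its Hardy average $f^{**}(t) = t^{-1}\int_0^t f^*(s)\,ds$, which gives an equivalent but genuinely subadditive norm whenever $p > 1$. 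With that in hand, the Hölder bound furnishes one direction of the duality, while the reverse direction requires constructing a representing element for a given continuous linear functional via a careful decomposition argument. I would import this last step essentially verbatim from \cite[Section 1.4]{GrafakosFourier}, since reproducing it in full would add length without shedding new light on the bi-rotational Euler problem that is the main subject of the paper.
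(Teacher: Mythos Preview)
The paper does not actually prove this proposition: immediately before stating it, the authors write that ``the proofs can be found in \cite[Section 1.4]{GrafakosFourier} and the references therein,'' and no argument is given. Your proposal therefore goes well beyond what the paper does, supplying a sketch where the paper only gives a citation. In that sense there is nothing to compare; your approach is simply more detailed.

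One small technical slip worth flagging: in your H\"older sketch you assert the pointwise bound $t^{1/r}(fg)^*(t) \le \big(t^{1/p}f^*(t)\big)\big(t^{1/q}g^*(t)\big)$, which is equivalent to $(fg)^*(t) \le f^*(t)\,g^*(t)$. This inequality is false in general (take for instance $f = 2\cdot\mathbf{1}_{[0,1]} + \mathbf{1}_{[1,2]}$ and $g = \mathbf{1}_{[0,1]} + 2\cdot\mathbf{1}_{[1,2]}$ on the line, and evaluate at $t=3/2$). The correct replacement is $(fg)^*(t) \le f^*(t/2)\,g^*(t/2)$, which follows from the same subadditivity of distribution functions you already used for the quasi-norm property; after rescaling $t \mapsto t/2$ this only changes the constant and the rest of your argument goes through. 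With that fix, your sketch is sound and, as you note, the duality statement is the only genuinely nontrivial item, for which deferring to Grafakos is entirely appropriate given the focus of the paper.
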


			\subsection{Vorticity formulation of the Euler equations}
			
			Introducing the vorticity tensor $\omg = (\omg^{i,j})_{1\le i,j \le n}$ where $\omega^{i,j} = \rd_{x_j}u^i-\rd_{x_i}u^j,$ \eqref{eq_Eulereq} becomes \begin{equation}\label{eq:Euler-vort}
				\begin{split}
					\rd_t\omega^{i,j} + u\cdot\nabla \omega^{i,j} = - \sum_{k=1}^n \left( \omega^{k,j}\rd_{x_k}u^i - \omega^{k,i}\rd_{x_k}u^j \right). 
				\end{split}
			\end{equation}  The components of $\omg$ are subject to the following consistency relations: \begin{equation}\label{eq:consist}
				\begin{split}
					\rd_{x_k}\omg^{i,j} + \rd_{x_i}\omg^{j,k} + \rd_{x_j}\omg^{k,i} = 0. 
				\end{split}
			\end{equation} To close the system, we introduce $\Psi^{i,j} = \Delta^{-1}\omega^{i,j}$: then, one can see that $u^i = \sum_{k=1}^n \rd_{x_k} \Psi^{i,k}$  and  \begin{equation}\label{eq:4D-Riesz}
				\begin{split}
					\rd_{x_j}u^i = -\sum_{k=1}^n R_{jk}\omega^{i,k}  
				\end{split}
			\end{equation} where $R_{jk} = \rd_{x_j}\rd_{x_k}(-\lap)^{-1}$. The system \eqref{eq:Euler-vort} supplemented with \eqref{eq:consist} and \eqref{eq:4D-Riesz} is the vorticity form of the incompressible Euler equations (\cite{Chemin}). 
			
			\subsection{Vorticity tensor in the bi-rotational geometry and derivation of the Biot--Savart law}
			
			In the bi-rotational case, the vorticity tensor can be written as a linear combination of $w$ and the derivatives of $u^{\tht}$ and $u^{\phi}$, with coefficients depending on $\tht$ and $\phi$. The formulas are recorded in the Appendix \ref{seca:vorticity}. In the no-swirl case, that is, when $u^{\tht} = u^{\phi} = 0$, we have that $\omg^{1,2} = \omg^{3,4} = 0$ and \begin{equation*}
				\begin{split}
					\omg^{1,3} = -\cos\tht\cos\phi w, \quad \omg^{1,4} = -\cos\tht\sin\phi w, \quad \omg^{2,3} = -\sin\tht\cos\phi w , \quad \omg^{2,4} = -\sin\tht\sin\phi w . 
				\end{split}
			\end{equation*} We are now ready to obtain the kernel expressions for the ``radial'' velocities $u^{r}$ and $u^{s}$ in terms of $w$, which will be our Biot-Savart law. (Observe that $u^{r}$ and $u^{s}$ do not depend on $u^{\tht}$ and $u^{\phi}$; we shall therefore assume that $u^{\tht} = u^{\phi} = 0$.) Our starting point is \begin{equation*}
				\begin{split}
					u^i = \sum_{j=1}^{4} \rd_{x_j} \lap^{-1}\omg^{i,j}= \sum_{j=1}^{4} \lap^{-1} \rd_{x_j}\omg^{i,j}. 
				\end{split}
			\end{equation*} Let us recall that $\lap^{-1}$ is defined in $\bbR^4$ by convolution against the kernel \begin{equation*}
				\begin{split}
					G(x) = -\frac{1}{4\pi^2}|x|^{-2}. 
				\end{split}
			\end{equation*} Hence the kernel for $\rd_{x_j}\lap^{-1}$ is given by $K = (K_j)_{1\le j\le 4}$ with \begin{equation*}
				\begin{split}
					K_j(x) = \frac{1}{2\pi^2} \frac{x_j}{|x|^4}. 
				\end{split}
			\end{equation*} We now compute, using the formulas in the previous section, \begin{equation*}
				\begin{split}
					\rd_{x_3}\omg^{2,3} &= - \sin\tht \cos^2\phi \rd_{s}w - \sin\tht \sin^2\phi \frac{w}{s}, \qquad \rd_{x_4}\omg^{2,4} = - \sin\tht w \sin^2\phi \rd_{s}w - \sin\tht\cos^2\phi \frac{w}{s}. 
				\end{split}
			\end{equation*} In particular, \begin{equation*}
				\begin{split}
					\frac{u^2}{x_2} = -\frac{1}{x_2} \lap^{-1}\left( \sin\tht (\rd_{s}w + \frac{w}{s}) \right) =  -\frac{1}{r} \widetilde{\lap}^{-1}\left(  \rd_{s}w + \frac{w}{s} \right) = \frac{u^1}{x_1}. 
				\end{split}
			\end{equation*} Here, \begin{equation*}
				\begin{split}
					\widetilde{\lap} = \rd_{r}^{2} + \frac{\rd_{r}}{r} - \frac{1}{r^2} + \rd_{s}^{2} + \frac{\rd_{s}}{s} - \frac{1}{s^2} 
				\end{split}
			\end{equation*} Similarly, we obtain \begin{equation*}
				\begin{split}
					\frac{u^3}{x_3} = \frac{1}{x_3} \lap^{-1}\left( \cos\tht (\rd_{r}w + \frac{w}{r}) \right) =  \frac{1}{s}\widetilde{\lap}^{-1}\left(  \rd_{r}w + \frac{w}{r}  \right) = \frac{u^4}{x_4}. 
				\end{split}
			\end{equation*} Note that, under the no-swirl assumption, \begin{equation*}
				\begin{split}
					\frac{u^2}{x_2} = \frac{u^1}{x_1} = \frac{u^{r}}{r}, \quad \frac{u^3}{x_3} = \frac{u^4}{x_4} = \frac{u^{s}}{s}. 
				\end{split}
			\end{equation*} It follows that \begin{equation*}
				\begin{split}
					\frac{u^{r}}{r} + \frac{u^{s}}{s} = - \frac{1}{r}\widetilde{\lap}^{-1}\left(  \rd_{s}w + \frac{w}{s} \right) + \frac{1}{s}\widetilde{\lap}^{-1}\left(  \rd_{r}w + \frac{w}{r} \right)
				\end{split}
			\end{equation*}
			Based on the computations above, we shall derive the following expression for the kernel of $u^{r}$ and $u^{s}$. We denote $ \Pi:=\lbrace (r,s)\in\bbR^{2} : r,s\geq0\rbrace $.
			\begin{lem}\label{lem:4D-Biot-Savart}
				We have  		
				\begin{equation}\label{eq_urform}
					u^{r}(r,s)=-\iint_{\Pi}F^{r}(r,s,\br{r},\br{s})w(\br{r},\br{s})d\br{s}d\br{r},
				\end{equation}
				with
				\begin{align}\label{eq_urkernel}
					F^{r}(r,s,\br{r},\br{s})
					&=\frac{2}{\pi^{2}}\int_{0}^{\pi}\int_{0}^{\pi}\frac{\br{r}\br{s}\cos\br{\tht}(\br{s}-s\cos\br{\phi})}{[(r-\br{r})^{2}+(s-\br{s})^{2}+2r\br{r}(1-\cos\br{\tht})+2s\br{s}(1-\cos\br{\phi})
						]^{2}}d\br{\phi} d\br{\tht},
				\end{align}
				and
				\begin{equation}\label{eq_usform}
					u^{s}(r,s)=\iint_{\Pi}F^{s}(r,s,\br{r},\br{s})w(\br{r},\br{s})d\br{s}d\br{r},
				\end{equation}
				with $ F^{s}(s,r,\br{s},\br{r})=F^{r}(r,s,\br{r},\br{s}). $
			\end{lem}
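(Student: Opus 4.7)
The starting point is the Cartesian Biot--Savart representation already set up in the excerpt: since $u^i=\sum_j \partial_{x_j}\lap^{-1}\omg^{i,j}$ and the fundamental solution of $\lap$ in $\bbR^4$ is $G(x)=-\frac{1}{4\pi^2}|x|^{-2}$, one has
\begin{equation*}
u^i(x)=\sum_{j=1}^{4}\int_{\bbR^4} K_j(x-y)\,\omg^{i,j}(y)\,dy,\qquad K_j(z)=\frac{1}{2\pi^2}\frac{z_j}{|z|^4}.
\end{equation*}
To extract $u^r(r,s)$, I would exploit the bi-rotational invariance of the formula: by evaluating at the special point $x=(r,0,s,0)$ (i.e.\ $\tht=\phi=0$), the decomposition \eqref{eq_bi-rotational} with $u^\tht=u^\phi=0$ gives $u^1(x)=u^r(r,s)$, and symmetrically $u^3(x)=u^s(r,s)$. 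Only the entries $\omg^{1,3},\omg^{1,4}$ (resp.\ $\omg^{3,1},\omg^{3,2}$) contribute, so I would plug in the no-swirl formulas $\omg^{1,3}=-\cos\br\tht\cos\br\phi\, w$, $\omg^{1,4}=-\cos\br\tht\sin\br\phi\, w$, evaluated at $y$.

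Next I would pass to bi-polar coordinates $y=(\br r\cos\br\tht,\br r\sin\br\tht,\br s\cos\br\phi,\br s\sin\br\phi)$ with Jacobian $\br r\br s$. A short calculation gives $(x-y)_3=s-\br s\cos\br\phi$, $(x-y)_4=-\br s\sin\br\phi$, and
\begin{equation*}
|x-y|^2=(r-\br r)^2+(s-\br s)^2+2r\br r(1-\cos\br\tht)+2s\br s(1-\cos\br\phi),
\end{equation*}
exactly the bracket appearing in the stated kernel. The key algebraic collapse is the identity
\begin{equation*}
(s-\br s\cos\br\phi)(-\cos\br\tht\cos\br\phi)+(-\br s\sin\br\phi)(-\cos\br\tht\sin\br\phi)=\cos\br\tht(\br s-s\cos\br\phi),
\end{equation*}
which assembles $K_3\omg^{1,3}+K_4\omg^{1,4}$ into the simple numerator of $F^r$. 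Finally, since the integrand in $(\br\tht,\br\phi)$ depends only on $\cos\br\tht$ and $\cos\br\phi$, symmetry of $\cos$ about $\pi$ folds $\int_0^{2\pi}\!\int_0^{2\pi}$ into $4\int_0^\pi\!\int_0^\pi$, producing the prefactor $\frac{4}{2\pi^2}=\frac{2}{\pi^2}$ and completing the formula \eqref{eq_urform}--\eqref{eq_urkernel}.

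For $u^s$, I would either repeat the same computation starting from $u^3(x)$ (using $\omg^{3,1}=\cos\br\tht\cos\br\phi\,w$, $\omg^{3,2}=\sin\br\tht\cos\br\phi\,w$ and the analogous trig collapse giving a numerator $\cos\br\phi(\br r-r\cos\br\tht)$), or more cheaply observe that the entire bi-rotational system is invariant under the involution swapping the two planes, i.e.\ $(r,s)\leftrightarrow(s,r)$, $(\br\tht,\br\phi)\leftrightarrow(\br\phi,\br\tht)$, and $u^r\leftrightarrow u^s$; this symmetry immediately forces $F^s(s,r,\br s,\br r)=F^r(r,s,\br r,\br s)$. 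I do not expect any real obstacle here: the main issue is careful bookkeeping of signs and trigonometric identities, and correctly accounting for the factor of $4$ produced by the $\br\tht,\br\phi$-symmetrization. The relative sign between $u^r$ and $u^s$ should be monitored during the computation, since it ultimately comes from the antisymmetry $\omg^{3,i}=-\omg^{i,3}$.
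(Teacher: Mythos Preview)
Your approach is correct and essentially the same as the paper's: both start from the Cartesian Biot--Savart law $u^i=\sum_j\partial_{x_j}\lap^{-1}\omg^{i,j}$, insert the no-swirl vorticity tensor, pass to bi-polar coordinates, and exploit the angular symmetry to fold $[0,2\pi]^2$ to $[0,\pi]^2$. The only difference is bookkeeping: the paper moves the derivative onto $\omg$ (working with $\lap^{-1}\partial_{x_j}\omg^{i,j}$) and then integrates by parts in $\bar s$ to remove $\partial_{\bar s}w$, whereas you keep the derivative on the Green's function and compute directly with the kernels $K_j$, which is slightly more streamlined and avoids the integration-by-parts step altogether.
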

			
			\begin{proof}[Proof of Lemma \ref{lem:4D-Biot-Savart}] Let us briefly comment on the notations: $x, \bar{x} \in \bbR^4$ will be expressed by $(r,\tht,s,\phi)$ and $(\bar{r},\bar{\tht},\bar{s},\bar{\phi})$ in the bi-polar coordinates, respectively. 
				Recall from above that, in the rectangular coordinate system, 
				\begin{equation*}
					\begin{split}
						(\frac{u^{r}}{r})(x) = -\frac{1}{4\pi^2 x_1} \int_{\bbR^4} \frac{\cos\bar{\tht}}{|x-\bar{x}|^2} \left( \rd_{s} w + \frac{w}{\bar{s}} \right) d\bar{x} .
					\end{split}
				\end{equation*} Rewriting in bi-polar coordinates and integrating by parts give that \begin{equation*}
					\begin{split}
						\frac{1}{ x_1} \int_{\bbR^4} \frac{\cos\bar{\tht}}{|x-\bar{x}|^2} \left( \rd_{s} w + \frac{w}{\bar{s}} \right) d\bar{x}&= \frac{1}{r\cos\tht} \int_0^\infty\int_0^\infty\int_0^{2\pi}\int_0^{2\pi} \cos\bar{\tht} \left( -\rd_{\bar{s}}\left(\frac{\bar{r}\bar{s}}{|x-\bar{x}|^2} \right) + \frac{\bar{r}}{|x-\bar{x}|^2} \right) w(\bar{r},\bar{s}) d\bar{\tht}d\bar{\phi}d\bar{r}d\bar{s}  \\
						&= \frac{1}{r\cos\tht} \int_0^\infty\int_0^\infty\int_0^{2\pi}\int_0^{2\pi} 2\bar{r}\bar{s}\cos\bar{\tht} \frac{\bar{s} - s\cos(\bar{\phi}-\phi)}{|x-\bar{x}|^4}w(\bar{r},\bar{s}) d\bar{\tht}d\bar{\phi}d\bar{r}d\bar{s}. 
					\end{split}
				\end{equation*} Note that $|x-\bar{x}|^2 = r^2+\bar{r}^2 -2r\bar{r} \cos(\bar{\tht}-\tht) +  s^2+\bar{s}^2 -2s\bar{s} \cos(\bar{\phi}-\phi)$ and a  change of variables gives that \begin{equation*}
					\begin{split}
						(\frac{u^{r}}{r})(x) & =\frac{1}{4\pi^{2}r\cos\tht}  \int_0^\infty\int_0^\infty\int_0^{2\pi}\int_0^{2\pi}
						2\bar{r}\bar{s} \frac{ (\cos\bar{\tht}\cos\tht - \sin\bar{\tht}\sin\tht)(\bar{s} - s\cos\bar{\phi})}{(r^2+\bar{r}^2 -2r\bar{r} \cos\bar{\tht} +  s^2+\bar{s}^2 -2s\bar{s} \cos\bar{\phi})^2}w(\bar{r},\bar{s})d\bar{\tht}d\bar{\phi}d\bar{r}d\bar{s} \\
						& = \frac{2}{\pi^{2}r}  
						\int_0^\infty\int_0^\infty\int_0^{\pi}\int_0^{\pi}\frac{ 2\bar{r}\bar{s}  \cos\bar{\tht}   (\bar{s} - s\cos\bar{\phi})}{(r^2+\bar{r}^2 -2r\bar{r} \cos\bar{\tht} +  s^2+\bar{s}^2 -2s\bar{s} \cos\bar{\phi})^2}w(\bar{r},\bar{s})d\bar{\tht}d\bar{\phi}d\bar{r}d\bar{s}.
					\end{split}
				\end{equation*} We have used that the integral term involving $\sin\tht$ vanishes after integrating in $\bar{\tht}$. After normalization, this gives the desired expression in \eqref{eq_urkernel}. 
				The proof for \eqref{eq_usform} 
				is parallel. 
			\end{proof}

			\section{Local regularity of $ w $}\label{sec_LWP}
			In this section, we estimate supremums of terms related to the velocity field $ u $ using the Biot--Savart kernels obtained from Lemma \ref{lem:4D-Biot-Savart} to prove Theorem \ref{thm_localreg}.
			
			\subsection{Estimates based on the Biot-Savart law}
			
			First, we present the following lemma, which says that supremums of $ u $, $ r^{-1}u^{r} $, and $ s^{-1}u^{s} $ are bounded by $ L^{4,1}- $norms of $ w $, $ r^{-1}w $, and $ s^{-1}w $, respectively.
			
			\begin{lem}\label{lem:vel-estimate1}
				We have the estimates \begin{equation}\label{eq:vel-Lorentz}
					\begin{split}
						\nrm{u^{r}}_{L^\infty}, \nrm{u^{s}}_{L^\infty} \le C \nrm{w}_{L^{4,1}}. 
					\end{split}
				\end{equation} \begin{equation}\label{eq:velratio-Lorentz}
					\begin{split}
						\nrm{r^{-1}u^{r}}_{L^\infty} \le C \nrm{r^{-1}w}_{L^{4,1}},\quad \nrm{s^{-1}u^{s}}_{L^\infty} \le C \nrm{s^{-1}w}_{L^{4,1}}. 
					\end{split}
				\end{equation}
			\end{lem}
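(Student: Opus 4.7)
The plan is to establish pointwise kernel bounds that, after rewriting the $(\br{r},\br{s})$-integration as a $4$D integral via $d\br{x}=\br{r}\br{s}\,d\br{r}\,d\br{s}\,d\br{\tht}\,d\br{\phi}$, reduce each estimate to H\"older's inequality in Lorentz spaces paired with $|\cdot|^{-3}\in L^{4/3,\infty}(\bbR^{4})$. Throughout, I interpret the denominator in \eqref{eq_urkernel} as $|x-\br{x}|^{2}$ with $x=(r,0,s,0)$ and $\br{x}=(\br{r}\cos\br{\tht},\br{r}\sin\br{\tht},\br{s}\cos\br{\phi},\br{s}\sin\br{\phi})$. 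By the symmetry $F^{s}(s,r,\br{s},\br{r})=F^{r}(r,s,\br{r},\br{s})$, it suffices to treat $u^{r}$ and $u^{r}/r$.

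For \eqref{eq:vel-Lorentz}, the elementary inequality
\begin{equation*}
(\br{s}-s\cos\br{\phi})^{2}\le (s-\br{s})^{2}+2s\br{s}(1-\cos\br{\phi})\le |x-\br{x}|^{2}
\end{equation*}
together with $|\cos\br{\tht}|\le 1$ directly yields $|F^{r}|\le C\br{r}\br{s}\int_{0}^{\pi}\!\int_{0}^{\pi}|x-\br{x}|^{-3}\,d\br{\tht}\,d\br{\phi}$. Substituting this into \eqref{eq_urform} and recognizing the $4$D volume element gives $|u^{r}(x)|\le C\int_{\bbR^{4}}|w(\br{x})|\,|x-\br{x}|^{-3}\,d\br{x}$, and H\"older closes the estimate.

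The real difficulty is \eqref{eq:velratio-Lorentz}: naively dividing the above bound by $r$ diverges at the axis, but one checks from \eqref{eq_urkernel} that $F^{r}(0,s,\br{r},\br{s})=0$, since the denominator is $\br{\tht}$-independent when $r=0$ and $\int_{0}^{\pi}\cos\br{\tht}\,d\br{\tht}=0$. My plan is to exploit this cancellation quantitatively through the symmetrization $\br{\tht}\mapsto\pi-\br{\tht}$ on $[\pi/2,\pi]$. Introducing the ``reflected'' distance
\begin{equation*}
\rho_{+}^{2}:=r^{2}+\br{r}^{2}+2r\br{r}\cos\br{\tht}+(s-\br{s})^{2}+2s\br{s}(1-\cos\br{\phi}),
\end{equation*}
so that $\rho_{+}^{2}-|x-\br{x}|^{2}=4r\br{r}\cos\br{\tht}$ and $\rho_{+}^{2}+|x-\br{x}|^{2}$ is $\br{\tht}$-independent, a brief algebraic computation produces the key identity
\begin{equation*}
\int_{0}^{\pi}\frac{\cos\br{\tht}}{|x-\br{x}|^{4}}\,d\br{\tht}=4r\br{r}\int_{0}^{\pi/2}\frac{\cos^{2}\br{\tht}\,(\rho_{+}^{2}+|x-\br{x}|^{2})}{|x-\br{x}|^{4}\rho_{+}^{4}}\,d\br{\tht}.
\end{equation*}
The prefactor $r\br{r}$ exactly cancels the $1/r$ in $F^{r}/r$ \emph{and} supplies one extra factor of $\br{r}$. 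Combined with $\rho_{+}\ge|x-\br{x}|$ on $[0,\pi/2]$ and $\rho_{+}^{2}\ge r^{2}+\br{r}^{2}\ge\br{r}^{2}$ (so that $(\rho_{+}^{2}+|x-\br{x}|^{2})/\rho_{+}^{4}\le 2/\br{r}^{2}$), together with $|\br{s}-s\cos\br{\phi}|\le |x-\br{x}|$, I arrive at $|F^{r}|/r\le C\br{s}\int_{0}^{\pi}\!\int_{0}^{\pi}|x-\br{x}|^{-3}\,d\br{\tht}\,d\br{\phi}$. The $4$D conversion then gives
\begin{equation*}
\frac{|u^{r}(x)|}{r}\le C\int_{\bbR^{4}}\frac{|w(\br{x})|}{\br{r}}\cdot\frac{1}{|x-\br{x}|^{3}}\,d\br{x},
\end{equation*}
and H\"older pairs $w/r\in L^{4,1}$ with $|\cdot|^{-3}\in L^{4/3,\infty}$ to close the bound.

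The principal obstacle is producing the symmetrization identity with the $r\br{r}$ prefactor: this is the bi-rotational analogue of the Shirota--Yanagisawa identity exploited by Danchin in the 3D axisymmetric setting, but in our 4D geometry the kernel does not reduce to an explicit one-variable function, so the argument must close with the somewhat lossy pointwise bound $\rho_{+}^{2}\ge\br{r}^{2}$ rather than an exact evaluation. Fortunately this looseness is absorbed by the translation-invariant $L^{4/3,\infty}$ structure of $|\cdot|^{-3}$, which is all that H\"older's inequality requires.
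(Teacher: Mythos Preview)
Your proof is correct and follows essentially the same route as the paper: the same symmetrization $\br{\tht}\mapsto\pi-\br{\tht}$ to extract the factor $r\br{r}$ (the paper writes this with a reflected point $\hat{x}$, your $\rho_{+}$ being exactly $|\hat{x}-\br{x}|$), followed by the same reduction to H\"older's inequality with $|\cdot|^{-3}\in L^{4/3,\infty}(\bbR^{4})$. Your bound $\rho_{+}^{2}\ge r^{2}+\br{r}^{2}\ge\br{r}^{2}$ on $[0,\pi/2]$ yields $\br{r}/\rho_{+}^{2}\le 1/\br{r}$ in one line, whereas the paper arrives at the same inequality $\br{r}\le C|\hat{x}-\br{x}|$ via a short case split on whether $r\le|x-\br{x}|$ or $r>|x-\br{x}|$; your shortcut is a mild streamlining of the same argument rather than a different idea.
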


			\begin{proof} Let us prove the estimate of $\nrm{u^{r}}_{L^\infty}$ and $\nrm{r^{-1}u^{r}}_{L^\infty}$. 
				We consider the kernel $F^{r}$: \begin{equation*}
					\begin{split}
						-2\pi^2 F^{r} = \br{r}\br{s}\int_0^{2\pi}\int_0^{2\pi} \frac{\cos\br{\tht} (\br{s} - s\cos\br{\phi})}{|x-\bar{x}|^4}  d\br{\tht} d\br{\phi} . 
					\end{split}
				\end{equation*} To prove \eqref{eq:vel-Lorentz}, we just note that \begin{equation*}
					\begin{split}
						|F^{r}| \le C\int_0^{2\pi}\int_0^{2\pi}  \frac{\br{r}\br{s}}{|x-\bar{x}|^3} d\br{\tht} d\br{\phi} . 
					\end{split}
				\end{equation*} which follows from $\br{s} - s\cos\br{\phi}\le |x-\bar{x}|$. Then \begin{equation*}
					\begin{split}
						|u^{r}|\le C\int_{\bbR^4} \frac{1}{|x-\bar{x}|^3} |w| d\bar{x} \le C \nrm{w}_{L^{4,1}}
					\end{split}
				\end{equation*} since $\bar{x}\mapsto \frac{1}{|x-\bar{x}|^3}$ belongs to $L^{\frac{4}{3},\infty}$ for any $x$. 
				
				To prove \eqref{eq:velratio-Lorentz}, we return to the expression for $F^{r}$ and split the $\br{\tht}$-integral into $[-\frac{\pi}{2},\frac{\pi}{2})$ and $[\frac{\pi}{2},\frac{3\pi}{2})$, and then make a change of variable $\br{\tht} \to \br{\tht} - \pi$ in the second region. Defining $\hat{x}$ to be the point corresponding to $(r,\pi,s,0)$, we have that \begin{equation*}
					\begin{split}
						-2\pi^2 F^{r} = 2\br{r}\br{s}\int_0^{2\pi}\int_{-\frac{\pi}{2}}^{\frac{\pi}{2}} \frac{ r\br{r} \cos^{2}\br{\tht}  (\br{s} - s\cos\br{\phi})}{|x-\bar{x}|^4|\hat{x}-\bar{x}|^4} \left( |x-\bar{x}|^2 + |\hat{x}-\bar{x}|^2 \right) d\br{\tht} d\br{\phi} . 
					\end{split}
				\end{equation*} This gives \begin{equation*}
					\begin{split}
						u^{r}(r,s) = c \int_{ \bbR^4 \cap\{ -\frac{\pi}{2}\le\tht <\frac{\pi}{2}\} } \frac{ r\br{r} \cos^{2}\br{\tht}  (\br{s} - s\cos\br{\phi})}{|x-\bar{x}|^4|\hat{x}-\bar{x}|^4} \left( |x-\bar{x}|^2 + |\hat{x}-\bar{x}|^2 \right) w(\br{r},\br{s}) d\bar{x} . 
					\end{split}
				\end{equation*} First, noting that $|\br{s} - s\cos\br{\phi}| \le |x-\bar{x}|$ and $|x-\bar{x}|\le |\hat{x} - \bar{x}| $, \begin{equation*}
					\begin{split}
						\frac{|u^{r}(r,s)|}{r} \le C \int_{ \bbR^4 \cap\{ -\frac{\pi}{2}\le\tht <\frac{\pi}{2}\} } \frac{  \br{r} }{|x-\bar{x}|^3|\hat{x}-\bar{x}|^2} |w(\br{r},\br{s})| d\bar{x} . 
					\end{split}
				\end{equation*} We now consider two cases: (i) $r \le |x-\bar{x}|$ and (ii) $r > |x-\bar{x}|$. In the case (i), we have \begin{equation*}
					\begin{split}
						\br{r} = \br{r} - r + r \le 2|x-\bar{x}|\le 2|\hat{x}-\bar{x}|
					\end{split}
				\end{equation*} and hence \begin{equation*}
					\begin{split}
						\frac{  \br{r} }{|x-\bar{x}|^3|\hat{x}-\bar{x}|^2} \le \frac{1}{|x-\bar{x}|^3 \br{r}}.
					\end{split}
				\end{equation*} In the case (ii), we have $r^2 \ge r^2 + \br{r}^2 - 2r\br{r}\cos\br{\tht},$ which gives $2r\cos\br{\tht}\ge \br{r}$ and hence  $$\br{r} \le \sqrt{r^2+\br{r}^2+2r\br{r}\cos\br{\tht}} \le |\hat{x}-\bar{x}|,$$ which gives the same bound. Therefore, \begin{equation*}
					\begin{split}
						\frac{|u^{r}(r,s)|}{r} \le C \int_{ \bbR^4 } \frac{1}{|x-\bar{x}|^3 }\frac{w}{\br{r}} d\bar{x} \le C \nrm{r^{-1}w}_{L^{4,1}}
					\end{split}
				\end{equation*} since the function $\bar{x}\mapsto \frac{1}{|x-\bar{x}|^3 }$ belongs to $L^{\frac{4}{3},\infty}$ for any $x$. 
			\end{proof}

			\subsection{Local well-posedness in the no-swirl case
			}
			
			In this section, we prove a well-posedness result for the no-swirl system \eqref{eq_Vorticityform}:
			\begin{equation*}
				\begin{split}
					\rd_t w +  (u^{r} \rd_{r} +u^{s} \rd_{s}  ) w =  \left( \frac{u^{r}}{r} + \frac{u^{s}}{s}  \right) w,
				\end{split}
			\end{equation*} supplemented with \eqref{eq:4D-div} and \eqref{eq:4D-vort}. In the remainder of this section, we shall write $u = (u^{r},u^{s})$ and $u\cdot\nb = u^{r}\rd_{r} + u^{s}\rd_{s}$. Under the assumption $ w\in L^{4,1}(\bbR^{4}) $ and $ w\in L^{\ift}(\bbR^{4}) $, $u\in \mathrm{logLip}$ and therefore the flow map is well-defined as a measure-preserving homeomorphism of $\bbR^4$. We borrow a proposition from \cite{Danaxi} which works in this setting: 
			\begin{prop}[{{\cite[Prop. 2]{Danaxi}}}]\label{prop:Dan}
				Let $ u $ be a divergence-free vector field in $ \bbR^{4} $ and 
				$h$ be a solution to \begin{equation*}
					\begin{split}
						\rd_t h+u\cdot\nb h = f
					\end{split}
				\end{equation*} on some time interval $[0,T]$. Then for any $1\le p,q\le+\infty$, \begin{equation*}
					\begin{split}
						\nrm{h(t)}_{L^{p,q}} \le \nrm{h_0}_{L^{p,q}} + \int_0^t \nrm{f(\tau)}_{L^{p,q}} d\tau ,
					\end{split}
				\end{equation*} for any $0<t\le T$. 
			\end{prop}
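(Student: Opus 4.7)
The plan is to integrate the transport equation along characteristics and exploit the fact that the flow of a divergence-free vector field preserves the Lebesgue measure, hence preserves the decreasing rearrangement and, a fortiori, every Lorentz (quasi-)norm.

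First I would define the flow $\Phi_t$ by $\rd_t\Phi_t(x) = u(t,\Phi_t(x))$ with $\Phi_0(x)=x$. Under the mild regularity needed for the transport equation to make sense (in the application, $w \in L^{4,1}\cap L^\infty$ delivers $u\in\mathrm{logLip}$, so $\Phi_t$ is a bi-H\"older homeomorphism of $\bbR^4$), the condition $\nb\cdot u = 0$ forces $\Phi_t$ to be measure-preserving. Along a trajectory the equation $\rd_t h + u\cdot\nb h = f$ collapses to the ODE
$$\frac{d}{dt}\bigl[h(t,\Phi_t(x))\bigr] = (\rd_t h + u\cdot\nb h)(t,\Phi_t(x)) = f(t,\Phi_t(x)),$$
which yields the Duhamel representation
$$h(t,\Phi_t(x)) = h_0(x) + \int_0^t f(\tau,\Phi_\tau(x))\,d\tau.$$

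Next, because $\Phi_t$ is measure-preserving, the decreasing rearrangements satisfy $(h(t)\circ\Phi_t)^* = h(t)^*$ and $(f(\tau)\circ\Phi_\tau)^* = f(\tau)^*$, so by Definition \ref{def:Lorentz} the Lorentz norms are preserved:
$$\nrm{h(t,\Phi_t(\cdot))}_{L^{p,q}} = \nrm{h(t)}_{L^{p,q}}, \qquad \nrm{f(\tau,\Phi_\tau(\cdot))}_{L^{p,q}} = \nrm{f(\tau)}_{L^{p,q}}.$$
Applying Minkowski's integral inequality in $x$ to the Duhamel identity then yields
$$\nrm{h(t)}_{L^{p,q}} \le \nrm{h_0}_{L^{p,q}} + \int_0^t \nrm{f(\tau)}_{L^{p,q}}\,d\tau.$$

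The main technical obstacle is that $\nrm{\cdot}_{L^{p,q}}$ is, in general, only a quasi-norm, so a naive application of Minkowski would introduce a multiplicative constant. The standard fix, which is the content of Danchin's argument in \cite{Danaxi}, is to replace $f^*$ by its maximal average $f^{**}(t) := t^{-1}\int_0^t f^*(s)\,ds$; the resulting functional is an equivalent norm for $1<p<\infty$ and $1\le q\le\infty$ that genuinely obeys the triangle inequality, so Minkowski applies cleanly. The endpoints $p=1,\infty$ reduce to the classical $L^1$ and $L^\infty$ estimates. A secondary subtlety is the rigorous justification of the characteristic representation when $u$ is only log-Lipschitz, handled by the classical Yudovich renormalization of the flow.
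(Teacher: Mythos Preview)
Your sketch is correct and follows the standard line: integrate along the measure-preserving flow, use that rearrangements (hence Lorentz norms) are invariant under equimeasurable rearrangement, and then apply Minkowski. You also correctly flag the only real issue, namely that $\nrm{\cdot}_{L^{p,q}}$ is in general only a quasi-norm, and the cure via the maximal average $f^{**}$.

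Note, however, that the paper does not give its own proof of this proposition: it is simply quoted from \cite[Prop.~2]{Danaxi} (``We borrow a proposition from \cite{Danaxi} which works in this setting''). So there is no proof in the paper to compare against; your argument is precisely the one Danchin gives, and there is nothing to add.
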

			We now give the proof of Theorem \ref{thm_localreg}.
			
			\begin{proof}[Proof of Theorem \ref{thm_localreg}]
				As we are concerned with solutions having uniformly bounded vorticity and velocity, uniqueness of the solution within such class is guaranteed in \cite[Section 3]{Danaxi}. Moreover, once a priori estimates are given, existence of a solution can be proved using standard smoothing procedure. Therefore let us just prove the necessary a priori estimates. 
				From the equation for $w$ and $ w/(rs) $, we have \begin{equation}\label{eq_wLinfty}
					\begin{split}
						\frac{d}{dt} \nrm{w}_{L^\infty} \le (\nrm{r^{-1}u^{r}}_{L^\infty} + \nrm{s^{-1}u^{s}}_{L^\infty}) \nrm{w}_{L^\infty} \le C (\nrm{r^{-1}w}_{L^{4,1}} + \nrm{s^{-1}w}_{L^{4,1}}) \nrm{w}_{L^\infty},\quad\bigg\|\frac{w(t)}{rs}\bigg\|_{L^{4,1}(\bbR^{4})}=\bigg\|\frac{w_{0}}{rs}\bigg\|_{L^{4,1}(\bbR^{4})},
					\end{split}
				\end{equation} and similarly \begin{equation}\label{eq_wL4,1}
					\begin{split}
						\frac{d}{dt} \nrm{w}_{L^{4,1}}  \le C (\nrm{r^{-1}w}_{L^{4,1}} + \nrm{s^{-1}w}_{L^{4,1}}) \nrm{w}_{L^{4,1}}
					\end{split}
				\end{equation} This shows that formally the norms $\nrm{w}_{L^\infty}$ and $\nrm{w}_{L^{4,1}}$ remain finite as long as $\nrm{r^{-1}w}_{L^{4,1}} + \nrm{s^{-1}w}_{L^{4,1}}$ does not blow up. 
				Now consider the equation for $\frac{w}{r}$ and $\frac{w}{s}$: 
				\begin{align}
					\rd_t \frac{w}{r}+  (u^{r} \rd_{r} +u^{s} \rd_{s}  ) \frac{w}{r} &= \frac{u^{s}}{s} \frac{w}{r},\label{eq_woverr}\\
					\rd_t \frac{w}{s}+  (u^{r} \rd_{r} +u^{s} \rd_{s}  ) \frac{w}{s} &= \frac{u^{r}}{r} \frac{w}{s}.\label{eq_wovers}
				\end{align} Estimating similarly as in the above, we have \begin{equation*}
					\begin{split}
						\frac{d}{dt}\nrm{r^{-1}w}_{L^{4,1}} &\le C\nrm{s^{-1}w}_{L^{4,1}}\nrm{r^{-1}w}_{L^{4,1}}, \qquad \frac{d}{dt}\nrm{s^{-1}w}_{L^{4,1}} \le C\nrm{r^{-1}w}_{L^{4,1}}\nrm{s^{-1}w}_{L^{4,1}}.
					\end{split}
				\end{equation*} This gives \begin{equation*}
					\begin{split}
						\frac{d}{dt}(\nrm{r^{-1}w}_{L^{4,1}}+\nrm{s^{-1}w}_{L^{4,1}}) \le C(\nrm{r^{-1}w}_{L^{4,1}}+\nrm{s^{-1}w}_{L^{4,1}})^2. 
					\end{split}
				\end{equation*} Therefore, for some interval of time $[0,T]$ with $T$ depending only on $ \nrm{r^{-1}w_0}_{L^{4,1}} + \nrm{s^{-1}w_0}_{L^{4,1}}$, $\nrm{r^{-1}w}_{L^{4,1}}+\nrm{s^{-1}w}_{L^{4,1}}$ remains bounded. Finally, we note that on the same time interval, $\nrm{u}_{L^\infty} \le C \nrm{w}_{L^{4,1}}$ remains finite as well. 
			\end{proof}

			\begin{rmk} 
				We point out that the solution is global in time once $\bbR^4$ is replaced with a bounded domain $\Omg$ satisfying bi-rotational symmetry, for example the $4$-ball defined by $\{ x\in\bbR^4:|x|\le1 \}$. Actually, as we shall see below it is only necessary to be bounded either in $r$ or $s$. One needs to add the condition that $\frac{w_0}{rs} \in L^{4,1}(\bbR^4)$. Then, there exists a unique global-in-time solution $w(t)$ to \eqref{eq_Vorticityform}. To see this, we estimate $r^{-1}w$ 
				as follows: \begin{equation*}
					\begin{split}
						\frac{d}{dt}\left\Vert \frac{w(t)}{r}\right\Vert_{L^{4,1}} &\le  \left\Vert \frac{u^{s}(t) }{s} \frac{w(t)}{r}\right\Vert_{L^{4,1}},
					\end{split}
				\end{equation*} and we bound \begin{equation*}
					\begin{split}
						\left\Vert \frac{u^{s}(t) }{s} \frac{w(t)}{r}\right\Vert_{L^{4,1}}  &= \left\Vert  u^{s}(t)  \frac{w(t)}{rs}\right\Vert_{L^{4,1}}  \le C\nrm{u(t)}_{L^\infty} \left\Vert \frac{w_0}{rs}\right\Vert_{L^{4,1}}. 
					\end{split}
				\end{equation*} On the other hand, \begin{equation*}
					\begin{split}
						\nrm{u(t)}_{L^\infty}\le \nrm{w(t)}_{L^{4,1}} \le CR\left\Vert \frac{w(t)}{r}\right\Vert_{L^{4,1}}
					\end{split}
				\end{equation*} where $R$ is the radius of $\Omg$ in the $r$-direction. Hence   \begin{equation}\label{eq:est-bdd-domain}
					\begin{split}
						\left\Vert \frac{w(t)}{r}\right\Vert_{L^{4,1}} \le \left\Vert \frac{w_0}{r}\right\Vert_{L^{4,1}} \exp\left( CR  \left\Vert \frac{w_0}{rs}\right\Vert_{L^{4,1}} \right).
					\end{split}
				\end{equation} Then, it is easy to see that $\nrm{s^{-1}w}_{L^{4,1}}$ and therefore $\nrm{w}_{L^\infty}$ remains bounded as well. 
				Of course, the estimates for the velocity needs to be re-proved since the domain is not $\bbR^4$ anymore. This can be done following \cite{Danaxi,Tam-axi} and we omit the details. 
			\end{rmk} 

			\section{Global regularity of $ w $}\label{sec_GWP}
			
			
			In this section, we present key 
			lemmas and estimates to prove Theorem \ref{thm_globreg}.

			\subsection{Some technical lemma}\label{subsec_techlem}
			Firstly, we introduce the following 
			lemma which is frequently used in the proof of Theorem \ref{thm_globreg}. 
			We denote
			\begin{equation}\label{eq_ftau}
				f_{a}(\tau):=\int_{0}^{\pi}\frac{1}{[2(1-\cos\tht)+\tau]^{a}}d\tht,\quad \tau>0,\ a>\frac{1}{2}.
			\end{equation}
			
			\begin{lem}\label{lem_estoff}
				$f_{a}$ from \eqref{eq_ftau} satisfies
				\begin{equation}\label{eq_ftauest}
					f_{a}(\tau)\lesssim_{a}\min\bigg\lbrace\frac{1}{\tau^{a-1/2}},\frac{1}{\tau^{a}}\bigg\rbrace,\quad\tau>0.
				\end{equation}
			\end{lem}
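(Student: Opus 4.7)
The plan is to establish the two bounds comprising the minimum separately, and then combine them.

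The bound $f_a(\tau) \le \pi \tau^{-a}$ is immediate: since $2(1-\cos\theta) \ge 0$ on $[0,\pi]$, the integrand is pointwise dominated by $\tau^{-a}$, and integration over an interval of length $\pi$ yields the claim.

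For the sharper bound $f_a(\tau) \lesssim_a \tau^{-(a-1/2)}$, I would first record the elementary inequality
$$ 1-\cos\theta \;=\; 2\sin^{2}(\theta/2) \;\ge\; \frac{2}{\pi^{2}}\,\theta^{2}, \qquad \theta \in [0,\pi], $$
which follows from the estimate $\sin(\theta/2) \ge \theta/\pi$ on $[0,\pi]$. This gives
$$ f_{a}(\tau) \;\le\; \int_{0}^{\pi}\frac{d\theta}{(c\,\theta^{2}+\tau)^{a}}, \qquad c := 4/\pi^{2}. $$
Then I perform the rescaling $\theta = \sqrt{\tau/c}\,u$, under which $c\theta^{2}+\tau = \tau(u^{2}+1)$ and $d\theta = \sqrt{\tau/c}\,du$. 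This transforms the integral into
$$ f_{a}(\tau) \;\le\; c^{-1/2}\,\tau^{\,1/2-a}\int_{0}^{\pi\sqrt{c/\tau}}\frac{du}{(u^{2}+1)^{a}} \;\le\; c^{-1/2}\,\tau^{\,1/2-a}\int_{0}^{\infty}\frac{du}{(u^{2}+1)^{a}}. $$
The last integral is finite precisely because $a > 1/2$; call its value $C_{a}$. This yields the bound $f_{a}(\tau) \le c^{-1/2}C_{a}\,\tau^{-(a-1/2)}$.

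Taking the minimum of the two bounds gives the claim. The argument is entirely elementary; the only place where the hypothesis $a > 1/2$ plays an essential role is ensuring convergence of $\int_{0}^{\infty}(u^{2}+1)^{-a}\,du$ at infinity, and this is the single technical point to verify. Note that the two bounds naturally correspond to different regimes: $\tau^{-a}$ is the better estimate for $\tau \gtrsim 1$, while $\tau^{-(a-1/2)}$ is sharper for $\tau \lesssim 1$ where the integrand concentrates near $\theta = 0$.
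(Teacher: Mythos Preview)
Your proof is correct and follows the same approach as the paper: bound $2(1-\cos\theta)$ below by a multiple of $\theta^{2}$, then rescale. The only difference is that you use the global inequality $1-\cos\theta \ge (2/\pi^{2})\theta^{2}$ on all of $[0,\pi]$, whereas the paper uses the comparison only on a small interval $[0,\varepsilon_{0}]$ and treats the tail $[\varepsilon_{0},\pi]$ separately via the uniform lower bound $2(1-\cos\theta)\ge C>0$; your version is slightly cleaner since it avoids this split and yields the $\tau^{-(a-1/2)}$ bound for all $\tau>0$ in one stroke.
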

			
			\begin{proof}
				The upper bound $f_{a}(\tau)\lesssim \tau^{-a}$ immediately follows from the form of $f_{a}$, so it suffices to show
				$$ \tau^{a-1/2}f_{a}(\tau)\lesssim_{a}1,\quad 0<\tau<1. $$
				Before proving this, let us take a sufficiently small $\veps_{0}>0$ that satisfies
				$$ 2(1-\cos\tht)\geq \frac{\tht^{2}}{2},\quad \tht\in[0,\veps_{0}]. $$
				We fix $\tau\in(0,1)$ and split $\tau^{a-1/2} f_{a}(\tau)$ into
				$$ \tau^{a-1/2} f_{a}(\tau)=\underbrace{\int_{0}^{\veps_{0}}\frac{\tau^{a-1/2}}{[2(1-\cos\tht)+\tau]^{a}}d\tht}_{=(A)}+\underbrace{\int_{\veps_{0}}^{\pi}\frac{\tau^{a-1/2}}{[2(1-\cos\tht)+\tau]^{a}}d\tht}_{=(B)}. $$
				For $(A)$, we use the change of variables $\alp=\frac{\tht}{\sqrt{2\tau}}$ to get
				$$ (A)\simeq\int_{0}^{\frac{\veps_{0}}{\sqrt{2\tau}}}\frac{1}{(\alp^{2}+1)^{a}}d\alp<\ift. $$
				For $(B)$, we use the fact that $2(1-\cos\tht)\geq C$ for some $C>0$ when $\tht\in[\veps_{0},\pi]$ to get
				$$ (B)\leq\int_{\veps_{0}}^{\pi}\frac{\tau}{C^{a}}d\tht<\ift. $$
				Thus, we get $ f_{a}(\tau)
				\lesssim_{a}1/\tau^{a-1/2},$ which finishes the proof.
			\end{proof}
			
			\subsection{Estimates of $\nrm{w(t)}_{L^{\ift}(\bbR^{4})}$, 
				$\nrm{rs^{-1}w(t)}_{L^{1}(\bbR^{4})}$, and $\nrm{r^{-1}sw(t)}_{L^{1}(\bbR^{4})}$}
			In this subsection, we will show 
			estimates of terms $\nrm{w(t)}_{L^{\ift}(\bbR^{4})}$, $\nrm{rs^{-1}w(t)}_{L^{1}(\bbR^{4})}$, and $\nrm{r^{-1}sw(t)}_{L^{1}(\bbR^{4})}$ in time. The following lemma says that if $ w_{0} $ has decay near the axes $ r=0 $ and $ s=0 $, then the supremum of $ w(t) $ is bounded by the ``length" function $ L $ defined in \eqref{eq_Ltdef}.
			\begin{lem}\label{lem_wtest}
				Let $w_{0}$ satisfy $
				(1+r+s+rs)w_{0}/(rs)
				\in L^{\ift}(\bbR^{4})$. Then we have
				\begin{equation}\label{eq_wtest}
					\nrm{w(t)}_{L^{\ift}(\bbR^{4})}\lesssim\bigg[\nrm{w_{0}}_{L^{\ift}(\bbR^{4})}+\bigg\|\frac{w_{0}}{r}\bigg\|_{L^{\ift}(\bbR^{4})}+\bigg\|\frac{w_{0}}{s}\bigg\|_{L^{\ift}(\bbR^{4})}+\bigg\|\frac{w_{0}}{rs}\bigg\|_{L^{\ift}(\bbR^{4})}\bigg]\cdot\bigg[1+\int_{0}^{t}(\nrm{u^{r}(\tau)}_{L^{\ift}(\bbR^{4})}+\nrm{u^{s}(\tau)}_{L^{\ift}(\bbR^{4})})d\tau\bigg]^{2}.
				\end{equation}
			\end{lem}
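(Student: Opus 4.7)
The plan is to transport the conserved quantity $w/(rs)$ along Lagrangian trajectories in the half-plane $\Pi$ and then exploit the simple algebraic identity
\begin{equation*}
R S - r_{0} s_{0} = r_{0}(S-s_{0}) + s_{0}(R-r_{0}) + (R-r_{0})(S-s_{0}).
\end{equation*}
Let $\Phi_{t} : \Pi \to \Pi$ denote the Lagrangian flow generated by the 2D vector field $(u^{r},u^{s})$. By Lemma \ref{lem:vel-estimate1} and the regularity from Theorem \ref{thm_localreg}, $(u^{r},u^{s})$ is log-Lipschitz and, moreover, $u^{r}/r$ and $u^{s}/s$ are bounded; hence the axes $\{r=0\}$ and $\{s=0\}$ are invariant and $\Phi_{t}$ is a well-defined homeomorphism of $\Pi$. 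Writing $(R(t),S(t)) = \Phi_{t}(r_{0},s_{0})$, the conservation law \eqref{eq:4D-adv-no-swirl} yields
\begin{equation*}
w\bigl(t,R(t),S(t)\bigr) = \frac{R(t)\,S(t)}{r_{0}\,s_{0}}\, w_{0}(r_{0},s_{0}).
\end{equation*}

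Next, set $A(t) := \int_{0}^{t}\nrm{u^{r}(\tau)}_{L^{\ift}}\,d\tau$ and $B(t) := \int_{0}^{t}\nrm{u^{s}(\tau)}_{L^{\ift}}\,d\tau$. Integrating the characteristic ODEs $\dot{R} = u^{r}\circ\Phi_{\tau}$ and $\dot{S} = u^{s}\circ\Phi_{\tau}$ gives $|R(t)-r_{0}| \le A(t)$ and $|S(t)-s_{0}| \le B(t)$. Plugging these into the identity above, dividing by $r_{0}s_{0}$ and multiplying by $|w_{0}(r_{0},s_{0})|$, I obtain the pointwise estimate
\begin{equation*}
\bigl|w(t,R,S)\bigr| \le |w_{0}(r_{0},s_{0})| + A(t)\frac{|w_{0}(r_{0},s_{0})|}{r_{0}} + B(t)\frac{|w_{0}(r_{0},s_{0})|}{s_{0}} + A(t)B(t)\frac{|w_{0}(r_{0},s_{0})|}{r_{0}s_{0}}.
\end{equation*}

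Since $\Phi_{t}$ is a bijection of $\Pi$, taking $L^{\ift}$ suprema on both sides (and using that $w$ is bi-rotational, so $L^{\ift}(\bbR^{4}) = L^{\ift}(\Pi)$) converts the right-hand side into the four norms appearing in the statement:
\begin{equation*}
\nrm{w(t)}_{L^{\ift}} \le \nrm{w_{0}}_{L^{\ift}} + A(t)\nrm{w_{0}/r}_{L^{\ift}} + B(t)\nrm{w_{0}/s}_{L^{\ift}} + A(t)B(t)\nrm{w_{0}/(rs)}_{L^{\ift}}.
\end{equation*}
Finally, each of $1$, $A(t)$, $B(t)$, $A(t)B(t)$ is dominated by $(1 + A(t) + B(t))^{2}$, which produces the factor in \eqref{eq_wtest} and closes the proof.

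The argument is essentially algebraic, so no serious analytic obstacle is expected; the only delicate point is justifying the Lagrangian identity at merely log-Lipschitz regularity, but this is exactly the setting in which Yudovich theory provides a unique flow and the conservation $w(t,\Phi_{t})/(RS) = w_{0}/(r_{0}s_{0})$ holds a.e., already used implicitly in the proof of Theorem \ref{thm_localreg}.
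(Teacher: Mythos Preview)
Your proof is correct and follows essentially the same approach as the paper: both use the Lagrangian flow $\Phi_t$, the conservation of $w/(rs)$ along trajectories, the bound $|R(t)-r_0|\le \int_0^t\|u^r\|_{L^\infty}\,d\tau$ (and similarly for $S$), and then expand the product $R(t)S(t)/(r_0 s_0)$ as $(1+A(t)/r_0)(1+B(t)/s_0)$ to produce the four terms. The paper writes out the factorization directly rather than via your displayed algebraic identity, but the two arguments are line-by-line equivalent.
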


			\begin{proof}
				We fix $t\geq0$ and define
				\begin{equation}\label{eq_Ltdef}
					L(t):=1+\int_{0}^{t}[\nrm{u^{r}(\tau)}_{L^{\ift}(\bbR^{4})}+\nrm{u^{s}(\tau)}_{L^{\ift}(\bbR^{4})}]d\tau.
				\end{equation}
				In addition, we denote $\Phi_{t}$ as the flow map that solves the ODE
				$$ \frac{d}{dt}\Phi_{t}(x)=u(t,\Phi_{t}(x)),\quad \Phi_{0}(x)=x. $$
				Also, we use the notation $r_{x}:=\sqrt{x_{1}^{2}+x_{1}^{2}}$, $s_{x}:=\sqrt{x_{3}^{2}+x_{4}^{2}}$, and denote $\Phi_{t}^{r}$ and $\Phi_{t}^{s}$ as the $r$ and $s$-component of $\Phi_{t}$, respectively.
				
				\medskip
				\noindent We take $y=\Phi_{t}^{-1}(x)$, where $\Phi_{t}^{-1}$ is the inverse of $\Phi_{t}$. Then note that we have
				$$ x=\Phi_{t}(y)=\Phi_{0}(y)+\int_{0}^{t}u(\tau,\Phi_{\tau}(y))d\tau=y+\int_{0}^{t}u(\tau,\Phi_{\tau}(y))d\tau, $$
				which gives us
				\begin{equation*}
					\begin{split}
						r_{x}&=r_{y}+\int_{0}^{t}u^{r}(\tau,\Phi_{\tau}(y))d\tau,\quad s_{x}=s_{y}+\int_{0}^{t}u^{s}(\tau,\Phi_{\tau}(y))d\tau.
					\end{split}
				\end{equation*}
				Also, we use the conservation of the relative vorticity $\frac{w}{rs}$ in time along the flow $\Phi_{t}$:
				$$ \frac{w(t,x)}{r_{x}s_{x}}=\frac{w(t,\Phi_{t}(y))}{\Phi_{t}^{r}(y)\Phi_{t}^{s}(y)}=\frac{w_{0}(y)}{r_{y}s_{y}}. $$
				Then we have
				\begin{equation*}
					\begin{split}
						|w(t,x)|&=r_{x}s_{x}\cdot\frac{|w(t,x)|}{r_{x}s_{x}}=\frac{r_{x}s_{x}}{r_{y}s_{y}}\cdot|w_{0}(y)|=\bigg(1+\frac{1}{r_{y}}\int_{0}^{t}u^{r}(\tau,\Phi_{\tau}(y))d\tau\bigg)\bigg(1+\frac{1}{s_{y}}\int_{0}^{t}u^{s}(\tau,\Phi_{\tau}(y))d\tau\bigg)\cdot|w_{0}(y)|\\
						&\leq\bigg(1+\frac{1}{r_{y}}\int_{0}^{t}\nrm{u^{r}(\tau)}_{L^{\ift}(\bbR^{4})}d\tau\bigg)\bigg(1+\frac{1}{s_{y}}\int_{0}^{t}\nrm{u^{s}(\tau)}_{L^{\ift}(\bbR^{4})}d\tau\bigg)\cdot|w_{0}(y)|\\
						&\leq\nrm{w_{0}}_{L^{\ift}(\bbR^{4})}+\bigg(\bigg\|\frac{w_{0}}{r}\bigg\|_{L^{\ift}(\bbR^{4})}+\bigg\|\frac{w_{0}}{s}\bigg\|_{L^{\ift}(\bbR^{4})}\bigg)\cdot L(t)+\bigg\|\frac{w_{0}}{rs}\bigg\|_{L^{\ift}(\bbR^{4})}\cdot L(t)^{2}.
					\end{split}
				\end{equation*}
			\end{proof}

			Next, the lemma below says that if the relative vorticity $ w_{0}/(rs) $ has a weak decay at infinity, then the $ L^{1}- $norms of $ rs^{-1}w(t) $ and $ r^{-1}sw(t) $ are controlled by the length function $ L $ from \eqref{eq_Ltdef}.
			
			\begin{lem}\label{lem_wL1}
				Let $ w_{0} $ 
				satisfy $
				(1+r^{2}+s^{2})w_{0}/(rs)
				\in L^{1}(\bbR^{4})$. 
				Then for any $t\geq0$, we have
				\begin{align}
					\bigg\|\frac{rw(t)}{s}\bigg\|_{L^{1}(\bbR^{4})}&\lesssim\bigg[\bigg\|\frac{rw_{0}}{s}\bigg\|_{L^{1}(\bbR^{4})}
					+\bigg\|\frac{w_{0}}{rs}\bigg\|_{L^{1}(\bbR^{4})}\bigg]\cdot\bigg[1+\int_{0}^{t}(\nrm{u^{r}(\tau)}_{L^{\ift}(\bbR^{4})}+\nrm{u^{s}(\tau)}_{L^{\ift}(\bbR^{4})})d\tau\bigg]^{2},\label{eq_rwoversL1est}\\
					\bigg\|\frac{sw(t)}{r}\bigg\|_{L^{1}(\bbR^{4})}&\lesssim\bigg[\bigg\|\frac{sw_{0}}{r}\bigg\|_{L^{1}(\bbR^{4})}
					+\bigg\|\frac{w_{0}}{rs}\bigg\|_{L^{1}(\bbR^{4})}\bigg]\cdot\bigg[1+\int_{0}^{t}(\nrm{u^{r}(\tau)}_{L^{\ift}(\bbR^{4})}+\nrm{u^{s}(\tau)}_{L^{\ift}(\bbR^{4})})d\tau\bigg]^{2}.\label{eq_swoverrL1est}
				\end{align}
			\end{lem}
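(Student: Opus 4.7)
The plan is to mimic the proof strategy of Lemma~\ref{lem_wtest}, exploiting the incompressibility of the flow map $\Phi_{t}$ and the fact that $w/(rs)$ is transported along $\Phi_t$. Since the right-hand sides of \eqref{eq_rwoversL1est} and \eqref{eq_swoverrL1est} are symmetric in $r$ and $s$, I only need to treat \eqref{eq_rwoversL1est}; the estimate \eqref{eq_swoverrL1est} follows by interchanging the roles of the two bi-polar radii.

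First, recall from the proof of Lemma~\ref{lem_wtest} the identities $r_x = r_y + \int_0^t u^r(\tau,\Phi_\tau(y))\,d\tau$ and $s_x = s_y + \int_0^t u^s(\tau,\Phi_\tau(y))\,d\tau$ when $x = \Phi_t(y)$, together with the Lagrangian conservation law $w(t,x)/(r_x s_x) = w_0(y)/(r_y s_y)$. Apply the change of variables $x = \Phi_t(y)$ (which is measure-preserving because $u$ is divergence-free) to obtain
\begin{equation*}
\bigg\|\frac{rw(t)}{s}\bigg\|_{L^{1}(\bbR^{4})}
= \int_{\bbR^{4}} \frac{\Phi_t^{r}(y)}{\Phi_t^{s}(y)}\,|w(t,\Phi_t(y))|\,dy
= \int_{\bbR^{4}} \frac{[\Phi_t^{r}(y)]^{2}}{r_y s_y}\,|w_0(y)|\,dy,
\end{equation*}
where the second equality uses the conservation of $w/(rs)$ to cancel one power of $\Phi_t^{s}(y)$ against $1/\Phi_t^{s}(y)$. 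The asymmetric factor $[\Phi_t^r]^2$ is precisely what forces us to need $rw_0/s \in L^1$ (alongside $w_0/(rs) \in L^1$), and this step is where the role of each coordinate becomes visible.

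Next, estimate $\Phi_t^{r}(y) \le r_y + \int_0^t \nrm{u^{r}(\tau)}_{L^{\ift}(\bbR^{4})}\,d\tau \le r_y + L(t)$, so that $[\Phi_t^{r}(y)]^{2} \le 2 r_y^{2} + 2 L(t)^{2}$. Substituting into the integral gives
\begin{equation*}
\bigg\|\frac{rw(t)}{s}\bigg\|_{L^{1}(\bbR^{4})}
\le 2\int_{\bbR^{4}} \frac{r_y}{s_y}\,|w_0(y)|\,dy + 2L(t)^{2}\int_{\bbR^{4}} \frac{|w_0(y)|}{r_y s_y}\,dy
= 2\bigg\|\frac{rw_0}{s}\bigg\|_{L^{1}(\bbR^{4})} + 2L(t)^{2}\bigg\|\frac{w_0}{rs}\bigg\|_{L^{1}(\bbR^{4})}.
\end{equation*}
Since $L(t) \ge 1$, the right-hand side is dominated by $\big[\|rw_0/s\|_{L^1} + \|w_0/(rs)\|_{L^1}\big]\cdot L(t)^{2}$, which is exactly \eqref{eq_rwoversL1est}. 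The proof of \eqref{eq_swoverrL1est} is identical after swapping $r \leftrightarrow s$ throughout.

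There is no serious obstacle here; the main point is simply to recognize that the transport identity $w(t,x)/(r_x s_x) = w_0(y)/(r_y s_y)$ causes the weight $r/s$ in $\|rw/s\|_{L^1}$ to collapse into $[\Phi_t^r]^2/(r_y s_y)$ after a measure-preserving change of variables, so that only an $L^\infty$ estimate on the radial displacement—not a two-sided control of the flow—is needed.
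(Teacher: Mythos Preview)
Your proof is correct and follows the same strategy as the paper: change variables via the measure-preserving flow map, use the Lagrangian conservation of $w/(rs)$ to rewrite the weighted $L^{1}$-norm as $\int [\Phi_t^{r}(y)]^{2}\,|w_0(y)|/(r_y s_y)\,dy$, and then control $\Phi_t^{r}(y)$ by $r_y + L(t)$. Your final step is actually slightly cleaner than the paper's, which splits into the cases $r_x \le L(t)$ and $r_x > L(t)$; you instead apply $(r_y + L(t))^{2} \le 2r_y^{2} + 2L(t)^{2}$ directly, arriving at the same bound without the case distinction.
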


			\begin{proof}[Proof of Lemma \ref{lem_wL1}]
				We use the same notations from the proof of the previous lemma. 
				To prove \eqref{eq_rwoversL1est}, let us consider the case when $r_{x}\leq L(t)$ or when $r_{x}>L(t)$. For the first case, we have
				\begin{equation*}
					\begin{split}
						\bigg\|\frac{rw(t)}{s}\bigg\|_{L^{1}(\bbR^{4})}&=\int_{\bbR^{4}}\frac{r_{x}|w(t,x)|}{s_{x}}dx=\int_{\bbR^{4}}r_{x}^{2}
						\cdot\frac{|w(t,x)|}{r_{x}s_{x}}dx=\int_{\bbR^{4}}[\Phi_{t}^{r}(y)]^{2}
						\cdot\frac{|w(t,\Phi_{t}(y))|}{\Phi_{t}^{r}(y)\Phi_{t}^{s}(y)}dy\\
						&=\int_{\bbR^{4}}[\Phi_{t}^{r}(y)]^{2}
						\cdot\frac{|w_{0}(y)|}{r_{y}s_{y}}dy\leq\bigg\|\frac{w_{0}}{rs}\bigg\|_{L^{1}(\bbR^{4})}\cdot L(t)^{2}.
					\end{split}
				\end{equation*}
				For the other case, we get $r_{y}>1$, which gives us
				\begin{equation*}
					\begin{split}
						\bigg\|\frac{rw(t)}{s}\bigg\|_{L^{1}(\bbR^{4})}
						&=\int_{\bbR^{4}}[\Phi_{t}^{r}(y)]^{2}
						\cdot\frac{|w_{0}(y)|}{r_{y}s_{y}}dy
						\leq\int_{\bbR^{4}}\bigg[1+\frac{L(t)}{r_{y}}\bigg]^{2}
						\cdot \frac{r_{y}|w_{0}(y)|}{s_{y}}dy\lesssim\bigg\|\frac{rw_{0}}{s}\bigg\|_{L^{1}(\bbR^{4})}\cdot L(t)^{2}.
					\end{split}
				\end{equation*}
				Combining the above two leads to \eqref{eq_rwoversL1est}. The proof of \eqref{eq_swoverrL1est} is done in the exact same way.
			\end{proof}
			
			\subsection{Estimate of $u^{r}$ and $u^{s}$}\label{subsec_est}
			Now let us present the key proposition of this section, which is about a time-independent estimate of the supremum of the velocity field $ u $.
			\begin{prop}\label{prop_urLiftest}
				We have
				\begin{equation}\label{eq_urLiftest}
					\max\lbrace\nrm{u^{r}}_{L^{\ift}(\bbR^{4})},\nrm{u^{s}}_{L^{\ift}(\bbR^{4})}\rbrace\lesssim\bigg[
					\bigg\|\frac{rw}{s}\bigg\|_{L^{1}(\bbR^{4})}+\bigg\|\frac{sw}{r}\bigg\|_{L^{1}(\bbR^{4})}\bigg]^{1/2}\cdot
					\bigg\|\frac{w}{rs}\bigg\|_{L^{\ift}(\bbR^{4})}^{1/2}.
				\end{equation}
			\end{prop}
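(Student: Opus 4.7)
The plan is to combine scale invariance with a careful decomposition of the Biot--Savart kernel against the vorticity. The approach follows the philosophy of Feng--Sverak in the three-dimensional axisymmetric case, but the bi-rotational geometry introduces a genuinely two-dimensional base-point variation that is the core technical difficulty.

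\emph{Step 1 (Scale and symmetry reductions).} Setting $w_\lambda(x) := \lambda\, w(\lambda x)$ for $\lambda > 0$, Lemma~\ref{lem:4D-Biot-Savart} gives $u^r_\lambda(x) = u^r(\lambda x)$, so $\|u^r_\lambda\|_{L^\infty}$ is invariant. A change of variables yields $\|w_\lambda/(rs)\|_{L^\infty} = \lambda^3\|w/(rs)\|_{L^\infty}$ and $\|rw_\lambda/s\|_{L^1(\bbR^4)} = \lambda^{-3}\|rw/s\|_{L^1}$ (and similarly with $r \leftrightarrow s$), so the right-hand side of \eqref{eq_urLiftest} is also scale-invariant. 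Hence it suffices to prove the bound for $(r,s)$ on the unit quarter-circle $\{r^2+s^2=1,\, r,s \ge 0\}$. The identity $F^s(s,r,\bar s,\bar r) = F^r(r,s,\bar r,\bar s)$ from Lemma~\ref{lem:4D-Biot-Savart} reduces the estimate of $u^s$ to that of $u^r$ with $r,s$ swapped, so I focus on $u^r$.

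\emph{Step 2 (Pointwise kernel bounds).} Using $|\bar s - s\cos\bar\phi| \le |x-\bar x|$, one has
\[
|F^r(r,s,\bar r,\bar s)| \lesssim \bar r\bar s\int_0^\pi\!\!\int_0^\pi\!\! \frac{d\bar\theta\, d\bar\phi}{\bigl(D + 2r\bar r(1-\cos\bar\theta) + 2s\bar s(1-\cos\bar\phi)\bigr)^{3/2}},
\]
where $D := (r-\bar r)^2 + (s-\bar s)^2$. Applying Lemma~\ref{lem_estoff} successively in $\bar\theta$ and $\bar\phi$ produces bounds of the form $|F^r|\lesssim \bar r\bar s\, g(D, r\bar r, s\bar s)$, where $g$ is a minimum of several elementary weights selected according to which of $r\bar r$, $s\bar s$, $D$ dominates.

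\emph{Step 3 (Decomposition and balancing).} For a scale $\rho > 0$ to be chosen, write
\[
|u^r(r,s)| \le \int_{\{D<\rho^2\}} |F^r|\,|w|\, d\bar r\, d\bar s + \int_{\{D\ge\rho^2\}} |F^r|\,|w|\, d\bar r\, d\bar s =: I_{\mathrm{near}} + I_{\mathrm{far}}.
\]
In $I_{\mathrm{near}}$, bound $|w| \le \bar r\bar s\, \|w/(rs)\|_{L^\infty}$ and integrate the kernel estimate over the disk $\{D<\rho^2\}$. In $I_{\mathrm{far}}$, use $2\bar r\bar s \le \bar r^2+\bar s^2$ together with
\[
\int\!\!\int \bar r^2|w|\, d\bar r\, d\bar s = (2\pi)^{-2}\|rw/s\|_{L^1(\bbR^4)}, \quad \int\!\!\int \bar s^2|w|\, d\bar r\, d\bar s = (2\pi)^{-2}\|sw/r\|_{L^1(\bbR^4)},
\]
coming from the bipolar volume element $dx = \bar r\bar s\, d\bar r\, d\bar s\, d\bar\theta\, d\bar\phi$. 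Dimensional analysis forces the resulting bounds to take the form $I_{\mathrm{near}} \lesssim \rho^{3}\, \|w/(rs)\|_{L^\infty}$ and $I_{\mathrm{far}} \lesssim \rho^{-3}(\|rw/s\|_{L^1} + \|sw/r\|_{L^1})$; optimizing in $\rho$ then yields the square-root product \eqref{eq_urLiftest}. A Cauchy--Schwarz implementation with paired weights $(\bar r\bar s)^{\pm 1/2}$ gives the same conclusion.

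\emph{Main obstacle.} The critical step is uniformity of the kernel estimate in the base point $(r,s)$ along the arc. In the 3D axisymmetric setting one also has translational invariance in $z$ and may further fix $(r,z)=(1,0)$; here no such reduction is available. Near the axes (say $s \ll r \simeq 1$), the factor $2s\bar s(1-\cos\bar\phi)$ in the denominator degenerates, so $F^r$ must be tracked in several sub-regimes of $(\bar r,\bar s)$. This degeneration is exactly why the weight $(r^2+s^2)w/(rs)$, rather than merely $rsw$, must be controlled in $L^1$: near each coordinate axis one of $\|rw/s\|_{L^1}$, $\|sw/r\|_{L^1}$ is the effective quantity, while the other serves as a reservoir for the opposite regime. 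Verifying the uniformity of the decomposition and matching the $\rho$-exponents to give exactly $1/2$ in the final bound is the main technical content of the proof.
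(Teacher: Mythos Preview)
Your overall strategy---scale reduction to the unit arc, the $r\leftrightarrow s$ symmetry, and a near/far splitting combined with H\"older---matches the paper's approach. However, Step~3 contains a genuine gap.

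The exponents $\rho^{3}$ and $\rho^{-3}$ are not what the kernel bounds from Step~2 actually produce. With the bound $|F^{r}|\lesssim (\bar r\bar s/rs)^{1/2}D^{-1/2}$ (Lemma~\ref{lem_estoff} applied twice, choosing the $\tau^{-a+1/2}$ branch) and $|w|\le \bar r\bar s\,\|w/(rs)\|_{L^\infty}$, the near integral on $\{D<\rho^{2}\}$ is $O(\rho)$, not $O(\rho^{3})$. For the far piece, the crude bound $|F^{r}|\lesssim \bar r\bar s\,D^{-3/2}$ does give $I_{\mathrm{far}}\lesssim \rho^{-3}(\|rw/s\|_{L^1}+\|sw/r\|_{L^1})$, but optimizing $\rho\,M+\rho^{-3}N$ yields $N^{1/4}M^{3/4}$, not $N^{1/2}M^{1/2}$. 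The ``Cauchy--Schwarz with weights $(\bar r\bar s)^{\pm1/2}$'' alternative runs into the same obstruction: with only $|F^{r}|\lesssim D^{-1/2}$ near and $|F^{r}|\lesssim \bar r\bar s\,D^{-3/2}$ far, $\int|F^{r}|^{2}$ diverges logarithmically at the diagonal. So the dimensional-analysis step is not merely heuristic here; it actually fails with the kernel bounds you have written down.

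What is missing is the cancellation coming from the factor $\cos\bar\theta$ in $F^{r}$. In the paper, the far-field estimate is obtained by symmetrizing the $\bar\theta$-integral over $[0,\pi/2)$ and $[\pi/2,\pi)$ (and then similarly in $\bar\phi$), which produces an extra factor of $r\bar r\cos^{2}\bar\theta$ in the numerator and replaces $X_{--}^{2}$ by $X_{--}^{2}X_{+-}^{2}$ in the denominator. Since $X_{+-}\ge (r+\bar r)^{2}\ge \bar r^{2}$ (and analogously $X_{-+}\ge \bar s^{2}$), this buys precisely the additional decay needed to get $|F^{r}|\lesssim D^{-1}$ in the far region. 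With that bound in hand, the paper uses a \emph{fixed} near box $A_{1}$ (size $\sim 1$) rather than a variable $\rho$, estimating $\int_{A_1}D^{-1/2}|w|\le \|w\|_{L^1(A_1)}^{1/2}\|w\|_{L^\infty(A_1)}^{1/2}$ and converting the unweighted norms to the weighted ones using $\bar r,\bar s\simeq 1$ (or the appropriate analogue near each axis). Your proposal never invokes this symmetrization, and without it the far-field decay is one power of $D^{1/2}$ short.
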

			
			\begin{proof}
				For any $\lmb>0$, let us consider $\tld{u}^{r}, \tld{u}^{s}$, and $\tld{w}$ defined as
				\begin{equation*}
					\tld{u}^{r}(r,s):=u^{r}(\lmb r,\lmb s),\quad\tld{u}^{s}(r,s):=u^{s}(\lmb r,\lmb s),\quad\tld{w}(r,s):=\lmb w(\lmb r,\lmb s),\quad (r,s)\in\Pi=[0,\ift)^{2},
				\end{equation*}
				where $\tld{u}^{r}$ (resp. $\tld{u}^{s}$) and $\tld{w}$ satisfy the Biot--Savart law \eqref{eq_urform} (resp. \eqref{eq_usform}) as well. Then to show \eqref{eq_urLiftest}, 
				it suffices to prove that for any $(r,s)\in\Pi$ satisfying $r^{2}+s^{2}=1$, we have
				\begin{equation}\label{eq_urscale}
					\max\lbrace|u^{r}(r,s)|,|u^{s}(r,s)|\rbrace\lesssim[
					\nrm{r^{2}w}_{L^{1}(\Pi)}+\nrm{s^{2}w}_{L^{1}(\Pi)}]^{1/2}\cdot
					\bigg\|\frac{w}{rs}\bigg\|_{L^{\ift}(\Pi)}^{1/2}.
				\end{equation}
				Indeed, if \eqref{eq_urscale} is true, then the same estimate holds for $\tld{u}^{r}$, $\tld{u}^{s}$, and $\tld{w}$ as well:
				\begin{equation*}
					\begin{split}
						|\tld{u}^{r}(r,s)|=|u^{r}(\lmb r,\lmb s)|,&\quad|\tld{u}^{s}(r,s)|=|u^{s}(\lmb r,\lmb s)|,\\
						[
						\nrm{r^{2}\tld{w}}_{L^{1}(\Pi)}+\nrm{s^{2}\tld{w}}_{L^{1}(\Pi)}]^{1/2}\cdot\bigg\|\frac{\tld{w}}{rs}\bigg\|_{L^{\ift}(\Pi)}^{1/2}
						&=[
						\lmb^{-3}\nrm{r^{2}w}_{L^{1}(\Pi)}+\lmb^{-3}\nrm{s^{2}w}_{L^{1}(\Pi)}]^{1/2}\cdot\lmb^{3/2}\bigg\|\frac{w}{rs}\bigg\|_{L^{\ift}(\Pi)}^{1/2}\\
						&=[
						\nrm{r^{2}w}_{L^{1}(\Pi)}+\nrm{s^{2}w}_{L^{1}(\Pi)}]^{1/2}\cdot\bigg\|\frac{w}{rs}\bigg\|_{L^{\ift}(\Pi)}^{1/2}.
					\end{split}
				\end{equation*}
				The proof is completed once we take supremum with respect to $\lmb>0$ on the left-hand side.
				
				\medskip
				\noindent Throughout the proof, 
				let us use the following notations
				\begin{equation*}
					\begin{split}
						X_{--}&=X_{--}(r,s,\br{r},\br{s},\br{\tht},\br{\phi}):=(r-\br{r})^{2}+(s-\br{s})^{2}+2r\br{r}(1-\cos\br{\tht})+2s\br{s}(1-\cos\br{\phi}),\\
						X_{+-}&=X_{+-}(r,s,\br{r},\br{s},\br{\tht},\br{\phi}):=(r-\br{r})^{2}+(s-\br{s})^{2}+2r\br{r}(1+\cos\br{\tht})+2s\br{s}(1-\cos\br{\phi}),\\
						X_{-+}&=X_{-+}(r,s,\br{r},\br{s},\br{\tht},\br{\phi}):=(r-\br{r})^{2}+(s-\br{s})^{2}+2r\br{r}(1-\cos\br{\tht})+2s\br{s}(1+\cos\br{\phi}),\\
						X_{++}&=X_{++}(r,s,\br{r},\br{s},\br{\tht},\br{\phi}):=(r-\br{r})^{2}+(s-\br{s})^{2}+2r\br{r}(1+\cos\br{\tht})+2s\br{s}(1+\cos\br{\phi}).
					\end{split}
				\end{equation*}
				We shall frequently use the inequalities
				$$ X_{--}\leq \min\lbrace X_{+-}, X_{-+}\rbrace,\quad \max\lbrace X_{+-}, X_{-+}\rbrace\leq X_{++}, $$
				
				and the simple estimates \begin{equation*}
					\begin{split}
						|s-\br{s}|,\quad  |r-\br{r}| \lesssim X_{--}^{\frac12}, \qquad r\br{r}(1-\cos\br{\tht}), \quad  s\br{s}(1-\cos\br{\phi}) \lesssim X_{--}. 
					\end{split}
				\end{equation*} This holds since each of the four terms in $X_{--}=(r-\br{r})^{2}+(s-\br{s})^{2}+2r\br{r}(1-\cos\br{\tht})+2s\br{s}(1-\cos\br{\phi})$ is nonnegative. Moreover, we shall split $F^{r}$ defined in \eqref{eq_urkernel} 
				into
				\begin{equation*}
					\begin{split}
						F^{r}(r,s,\br{r},\br{s})&=(I)+(II),\\
						(I)&:=-\frac{2}{\pi^{2}}\int_{0}^{\pi}\int_{0}^{\pi}\frac{\br{r}\br{s}\cos\br{\tht}(\br{s}-s)}{X_{--}
							^{2}}d\br{\phi}d\br{\tht},\quad 
						(II):=-\frac{2}{\pi^{2}}\int_{0}^{\pi}\int_{0}^{\pi}\frac{\br{r}s\br{s}\cos\br{\tht}(1-\cos\br{\phi})}{X_{--}
							^{2}}d\br{\phi}d\br{\tht}.
					\end{split}
				\end{equation*}
				We shall frequently use Lemma \ref{lem_estoff} to estimate $(I)$ and $(II)$.  More precisely, we have for instance \begin{equation*}
					\begin{split} 
						\int_{0}^{\pi}\frac{1}{X_{--}} d\br{\tht} \le 
						\frac{1}{2r \bar{r}}\int_{0}^{\pi}\frac{1}{ \frac{(r-\br{r})^{2}+(s-\br{s})^{2}}{2r\bar{r}} + (1-\cos\br{\tht})}d\br{\tht} \lesssim \frac{1}{(r\bar{r})^{\frac12}} \frac{1}{ ((r-\br{r})^{2}+(s-\br{s})^{2})^{\frac12} }. 
					\end{split}
				\end{equation*}
				
				\medskip
				\noindent 
				\textbf{Case 1.} $1/4\leq r\leq 3/4$.
				
				\medskip
				\noindent We split the domain $\Pi$ into
				\begin{equation*}
					\begin{split}
						A_{1}&:=\lbrace(\br{r},\br{s})\in\Pi : r-\frac{1}{8}\leq\br{r}\leq r+2, s-\frac{1}{8}\leq\br{s}\leq s+2\rbrace,\quad A_{2}:=\Pi\setminus A_{1},\\
						A_{21}&:=A_{2}\cap\lbrace(\br{r},\br{s})\in\Pi : \br{r}<r+2, \br{s}<s+2\rbrace,\quad A_{22}:=
						\lbrace(\br{r},\br{s})\in\Pi : \br{r}>r+2, \br{s}<s+2\rbrace\subset A_{2},\\
						A_{23}&:=
						\lbrace(\br{r},\br{s})\in\Pi : \br{r}<r+2, \br{s}>s+2\rbrace\subset A_{2},\quad A_{24}:=
						\lbrace(\br{r},\br{s})\in\Pi : \br{r}>r+2, \br{s}>s+2\rbrace\subset A_{2}.
					\end{split}
				\end{equation*}

				\medskip
				\noindent 
				(i) $(\br{r},\br{s})\in A_{1}$.
				
				\medskip
				\noindent For $(I)$, we have
				\begin{equation*}
					\begin{split}
						|(I)|&\lesssim\int_{0}^{\pi}\int_{0}^{\pi}\frac{\br{r}\br{s}}{X_{--}
							^{3/2}}d\br{\phi}d\br{\tht}\lesssim\frac{\br{r}\br{s}^{1/2}}{s^{1/2}}\int_{0}^{\pi}\frac{1}{(r-\br{r})^{2}+(s-\br{s})^{2}+2r\br{r}(1-\cos\br{\tht})}d\br{\tht}\\
						&\lesssim\bigg(\frac{\br{r}\br{s}}{rs}\bigg)^{1/2}\frac{1}{[(r-\br{r})^{2}+(s-\br{s})^{2}]^{1/2}}\lesssim\frac{1}{[(r-\br{r})^{2}+(s-\br{s})^{2}]^{1/2}}.
					\end{split}
				\end{equation*}
				For $(II)$, we get
				\begin{equation*}
					\begin{split}
						|(II)|&\lesssim\int_{0}^{\pi}\int_{0}^{\pi}\frac{\br{r}[s\br{s}(1-\cos\br{\phi})]^{1/2}}{X_{--}
							^{3/2}}d\br{\phi}d\br{\tht}\lesssim
						\int_{0}^{\pi}\frac{\br{r}}{(r-\br{r})^{2}+(s-\br{s})^{2}+2r\br{r}(1-\cos\br{\tht})}d\br{\tht}\\
						&\lesssim\frac{\br{r}^{1/2}}{r^{1/2}}\cdot\frac{1}{[(r-\br{r})^{2}+(s-\br{s})^{2}]^{1/2}}\lesssim\frac{1}{[(r-\br{r})^{2}+(s-\br{s})^{2}]^{1/2}}.
					\end{split}
				\end{equation*}
				Thus, we have
				\begin{equation*}
					\begin{split}
						\bigg|\iint_{A_{1}}F^{r}(r,s,\br{r},\br{s})w(\br{r},\br{s})d\br{s}d\br{r}\bigg|&\lesssim\iint_{A_{1}}\frac{1}{[(r-\br{r})^{2}+(s-\br{s})^{2}]^{1/2}}|w(\br{r},\br{s})|d\br{s}d\br{r}\\
						&\leq\nrm{w}_{L^{1}(A_{1})}^{1/2}\nrm{w}_{L^{\ift}(A_{1})}^{1/2}\lesssim\nrm{rsw}_{L^{1}(A_{1})}^{1/2}\bigg\|\frac{w}{rs}\bigg\|_{L^{\ift}(A_{1})}^{1/2} {\leq[\nrm{r^{2}w}_{L^{1}(A_{1})}+\nrm{s^{2}w}_{L^{1}(A_{1})}]^{1/2}\bigg\|\frac{w}{rs}\bigg\|_{L^{\ift}(A_{1})}^{1/2}.}
					\end{split}
				\end{equation*}
				(ii) $(\br{r},\br{s})\in A_{2}$.
				
				\medskip
				\noindent Note that in this region, the term $ [(r-\br{r})^{2}+(s-\br{s})^{2}]^{1/2} $ has a constant lower bound: $[(r-\br{r})^{2}+(s-\br{s})^{2}]^{1/2}\gtrsim1.$
				Then using the triangular inequality, we have
				\begin{equation*}
					(\br{r}^{2}+\br{s}^{2})^{1/2}\leq[(r-\br{r})^{2}+(s-\br{s})^{2}]^{1/2}+(\underbrace{r^{2}+s^{2}}_{=1})^{1/2}\lesssim[(r-\br{r})^{2}+(s-\br{s})^{2}]^{1/2}.
				\end{equation*}
				Keeping this bound in mind, let us analyze the kernel $F^{r}$. We shall use the following notations:
				\begin{equation*}
					\begin{split}
						Y_{-}&:=\frac{X_{--}+X_{+-}}{2}=(r-\br{r})^{2}+(s-\br{s})^{2}+2r\br{r}+2s\br{s}(1-\cos\br{\phi}),\\
						Y_{+}&:=\frac{X_{-+}+X_{++}}{2}=(r-\br{r})^{2}+(s-\br{s})^{2}+2r\br{r}+2s\br{s}(1+\cos\br{\phi}).
					\end{split}
				\end{equation*}
				Observe that $F^{r}$ can be rewritten as
				\begin{equation*}
					\begin{split}
						F^{r}(r,s,\br{r},\br{s})&\simeq\int_{0}^{\pi}\int_{0}^{\pi}\frac{\br{r}\br{s}\cos\br{\tht}(\br{s}-s\cos\br{\phi})}{X_{--}^{2}}d\br{\phi}d\br{\tht}\simeq\int_{0}^{\frac{\pi}{2}}\int_{0}^{\pi}\frac{\br{r}^{2}\br{s}\cos^{2}\br{\tht}(\br{s}-s\cos\br{\phi})Y_{-}}{X_{--}^{2}X_{+-}^{2}}d\br{\phi}d\br{\tht}\\
						&=\int_{0}^{\frac{\pi}{2}}\int_{0}^{\frac{\pi}{2}}\br{r}^{2}\br{s}\cos^{2}\br{\tht}\bigg[\frac{(\br{s}-s\cos\br{\phi})Y_{-}}{X_{--}^{2}X_{+-}^{2}}+\frac{(\br{s}+s\cos\br{\phi})Y_{+}}{X_{-+}^{2}X_{++}^{2}}\bigg]d\br{\phi}d\br{\tht}.
					\end{split}
				\end{equation*}
				When $(\br{r},\br{s})\in A_{21}$, we have
				\begin{equation*}
					\begin{split}
						|F^{r}(r,s,\br{r},\br{s})|&\lesssim\frac{1}{X_{--}^{2}X_{+-}}+\frac{1}{X_{-+}^{2}X_{++}}\leq\frac{1}{[(r-\br{r})^{2}+(s-\br{s})^{2}]^{3}}.
					\end{split}
				\end{equation*}
				For the case $(\br{r},\br{s})\in A_{22}$, we get
				\begin{equation*}
					\begin{split}
						|F^{r}(r,s,\br{r},\br{s})|&\lesssim\frac{\br{r}^{2}}{X_{--}^{2}X_{+-}}+\frac{\br{r}^{2}}{X_{-+}^{2}X_{++}}\leq\frac{1}{[(r-\br{r})^{2}+(s-\br{s})^{2}]^{2}}.
					\end{split}
				\end{equation*}
				If $(\br{r},\br{s})\in A_{23}$, then we have
				\begin{equation*}
					\begin{split}
						|F^{r}(r,s,\br{r},\br{s})|&\lesssim\frac{\br{s}^{2}}{X_{--}^{2}X_{+-}}+\frac{\br{s}^{2}}{X_{-+}^{2}X_{++}}\leq\frac{1}{[(r-\br{r})^{2}+(s-\br{s})^{2}]^{2}}.
					\end{split}
				\end{equation*}
				Lastly, when $(\br{r},\br{s})\in A_{24}$, we get
				\begin{equation*}
					\begin{split}
						|F^{r}(r,s,\br{r},\br{s})|&\lesssim\frac{\br{r}^{2}\br{s}^{2}}{X_{--}^{2}X_{+-}}+\frac{\br{r}^{2}\br{s}^{2}}{X_{-+}^{2}X_{++}}\leq\frac{1}{(r-\br{r})^{2}+(s-\br{s})^{2}}.
					\end{split}
				\end{equation*}
				In sum, in the region $A_{2}$, we obtain
				\begin{equation}\label{eq_FrA2est}
					\begin{split}
						|F^{r}(r,s,\br{r},\br{s})|&\lesssim
						\frac{1}{(r-\br{r})^{2}+(s-\br{s})^{2}}.
					\end{split}
				\end{equation}
				Using this, we have by H\"older's inequality that 
				\begin{equation}\label{eq_case1A2Fr}
					\begin{split}
						\bigg|\iint_{A_{2}}F^{r}(r,s,\br{r},\br{s})w(\br{r},\br{s})d\br{s}d\br{r}\bigg|&\lesssim\iint_{A_{2}}\frac{1}{(\br{r}-1)^{2}+(\br{s}-1)^{2}}\cdot(\br{r}\br{s})^{1/2}\cdot|w(\br{r},\br{s})|^{1/2}
						\cdot\frac{|w(\br{r},\br{s})|^{1/2}}{(\br{r}\br{s})^{1/2}}d\br{s}d\br{r}\\
						&\lesssim
						\nrm{rsw}_{L^{1}(A_{2})}^{1/2}\bigg\|\frac{w}{rs}\bigg\|_{L^{\ift}(A_{2})}^{1/2} {\leq[\nrm{r^{2}w}_{L^{1}(A_{2})}+\nrm{s^{2}w}_{L^{1}(A_{2})}]^{1/2}\bigg\|\frac{w}{rs}\bigg\|_{L^{\ift}(A_{2})}^{1/2}.}
					\end{split}
				\end{equation}

				\medskip
				\noindent 	
				\textbf{Case 2.} $0 \le r<1/4$.
				
				\medskip
				\noindent Assume for a moment that $0 < r < 1/4$. In this case, we split the domain $\Pi$ into
				\begin{equation*}
					\begin{split}
						A_{1}'&:=\lbrace(\br{r},\br{s})\in\Pi : 0\leq\br{r}\leq r+\frac{1}{4}, s-\frac{1}{4}\leq\br{s}\leq s+2\rbrace,\quad A_{2}':=\Pi\setminus A_{1}',\\
						A_{21}'&:=
						\lbrace(\br{r},\br{s})\in\Pi : 0\leq\br{r}\leq r+\frac{1}{4}, 0\leq\br{s}<s-\frac{1}{4}\rbrace\subset A_{2}',\quad A_{22}':=
						\lbrace(\br{r},\br{s})\in\Pi : \br{r}>r+\frac{1}{4}, \br{s}\leq s+2\rbrace\subset A_{2}',\\
						A_{23}'&:=
						\lbrace(\br{r},\br{s})\in\Pi : 0\leq\br{r}\leq r+\frac{1}{4}, \br{s}>s+2\rbrace\subset A_{2}',\quad A_{24}':
						\lbrace(\br{r},\br{s})\in\Pi : \br{r}>r+\frac{1}{4}, \br{s}>s+2\rbrace\subset A_{2}'.
					\end{split}
				\end{equation*}
				(i) $(\br{r},\br{s})\in A_{1}'$.
				
				\medskip
				\noindent  We note that \begin{equation*}
					\begin{split}
						\bar{r} \le \bar{r}^{1/2}(|r-\bar{r}|+r)^{1/2} \le \bar{r}^{1/2}|r-\bar{r}|^{1/2} + (\bar{r}r)^{1/2} \le \frac12\bar{r} + 2|r-\bar{r}| + (\bar{r}r)^{1/2} ,
					\end{split}
				\end{equation*} which gives $\bar{r} \lesssim |r-\bar{r}| + (\bar{r}r)^{1/2}$. Then for $(I)$, we have after applying Lemma \ref{lem_estoff} to integral in $\bar{\phi}$, 
				\begin{equation*}
					\begin{split}
						|(I)|&\lesssim\int_{0}^{\pi}\int_{0}^{\pi}\frac{\br{r}\br{s}}{X_{--}^{3/2}}d\br{\phi}d\br{\tht}
						\lesssim   \frac{\br{s}^{1/2}}{s^{1/2}}\int_{0}^{\pi}\frac{|r-\bar{r}| + (\bar{r}r)^{1/2}}{(r-\br{r})^{2}+(s-\br{s})^{2}+2r\br{r}(1-\cos\br{\tht})}d\br{\tht}.  
					\end{split}
				\end{equation*} Then, for the first term, we simply use $|r-\bar{r}| \le ((r-\br{r})^{2}+(s-\br{s})^{2}+2r\br{r}(1-\cos\br{\tht}))^{1/2}$ and for the second term, Lemma \ref{lem_estoff}. This gives \begin{equation*}
					\begin{split}
						|(I)| \lesssim \bigg(\frac{\br{s}}{s}\bigg)^{1/2}\frac{1}{[(r-\br{r})^{2}+(s-\br{s})^{2}]^{1/2}}\lesssim\frac{1}{[(r-\br{r})^{2}+(s-\br{s})^{2}]^{1/2}}.
					\end{split}
				\end{equation*}
				For $(II)$, we get after applying  $\bar{r} \lesssim |r-\bar{r}| + (\bar{r}r)^{1/2}$, 
				\begin{equation*}
					\begin{split}
						|(II)|&\lesssim\int_{0}^{\pi}\int_{0}^{\pi}\frac{\br{r}[s\br{s}(1-\cos\br{\phi})]^{1/2}}{X_{--}
							^{3/2}}d\br{\phi}d\br{\tht}\lesssim
						\int_{0}^{\pi}\frac{ |r-\bar{r}| + (\bar{r}r)^{1/2}}{(r-\br{r})^{2}+(s-\br{s})^{2}+2r\br{r}(1-\cos\br{\tht})}d\br{\tht} 
						\lesssim  \frac{1}{[(r-\br{r})^{2}+(s-\br{s})^{2}]^{1/2}}
					\end{split}
				\end{equation*} similarly as in the case of $(I)$. 
				Hence, we obtain
				\begin{equation}\label{eq_case2A1Fr}
					\begin{split}
						\bigg|\iint_{A_{1}'}F^{r}(r,s,\br{r},\br{s})w(\br{r},\br{s})d\br{s}d\br{r}\bigg|&\lesssim\iint_{A_{1}'}\frac{|w(\br{r},\br{s})|}{[(r-\br{r})^{2}+(s-\br{s})^{2}]^{1/2}}d\br{s}d\br{r} \leq\nrm{w}_{L^{1}(A_{1}')}^{1/2}\nrm{w}_{L^{\ift}(A_{1}')}^{1/2}\lesssim\nrm{s^{2}w}_{L^{1}(A_{1}')}^{1/2}\bigg\|\frac{w}{rs}\bigg\|_{L^{\ift}(A_{1}')}^{1/2}.
					\end{split}
				\end{equation}
				(ii) $(\br{r},\br{s})\in A_{2}'$.
				
				\medskip
				\noindent Note that the term $ [(r-\br{r})^{2}+(s-\br{s})^{2}]^{1/2} $ has a constant lower bound: $[(r-\br{r})^{2}+(s-\br{s})^{2}]^{1/2}\gtrsim1.$ 
				Then from the triangular inequality, we have
				\begin{equation*}
					(\br{r}^{2}+\br{s}^{2})^{1/2}\leq[(r-\br{r})^{2}+(s-\br{s})^{2}]^{1/2}+(\underbrace{r^{2}+s^{2}}_{=1})^{1/2}\lesssim[(r-\br{r})^{2}+(s-\br{s})^{2}]^{1/2}.
				\end{equation*}
				Using this, we repeat the process from \textbf{Case 1}-(ii) to get
				\begin{equation*}
					|F^{r}(r,s,\br{r},\br{s})|\lesssim\begin{cases}
						[(r-\br{r})^{2}+(s-\br{s})^{2}]^{-3} 
						& \ \text{in} \ A_{21}',\\
						[(r-\br{r})^{2}+(s-\br{s})^{2}]^{-2}
						& \ \text{in} \ A_{22}'\cup A_{23}',\\
						[(r-\br{r})^{2}+(s-\br{s})^{2}]^{-1}
						& \ \text{in} \ A_{24}',\\
					\end{cases}
				\end{equation*}
				which gives us
				\begin{equation*}
					\begin{split}
						\bigg|\iint_{A_{2}'}F^{r}(r,s,\br{r},\br{s})w(\br{r},\br{s})d\br{s}d\br{r}\bigg|&\lesssim\iint_{A_{2}'}\frac{1}{(\br{r}-1)^{2}+(\br{s}-1)^{2}}\cdot(\br{r}\br{s})^{1/2}\cdot|w(\br{r},\br{s})|^{1/2}
						\cdot\frac{|w(\br{r},\br{s})|^{1/2}}{(\br{r}\br{s})^{1/2}}d\br{s}d\br{r}\\
						&\lesssim
						\nrm{rsw}_{L^{1}(A_{2}')}^{1/2}\bigg\|\frac{w}{rs}\bigg\|_{L^{\ift}(A_{2}')}^{1/2} {\leq[\nrm{r^{2}w}_{L^{1}(A_{2}')}+\nrm{s^{2}w}_{L^{1}(A_{2}')}]^{1/2}\bigg\|\frac{w}{rs}\bigg\|_{L^{\ift}(A_{2}')}^{1/2}.}
					\end{split}
				\end{equation*} Now when $r=0$, for any $(\br{r},\br{s})\in\Pi$, we simply have
				\begin{equation*}
					\begin{split}
						F^{r}(0,1,\br{r},\br{s})
						=-\frac{1}{\pi^{2}}\int_{0}^{\pi}\frac{\br{r}\br{s}(\br{s}-\cos\br{\phi})}{[\br{r}^{2}+(\br{s}-1)^{2}+2\br{s}(1-\cos\br{\phi})]^{2}}\bigg(\int_{0}^{\pi}\cos\br{\tht}d\br{\tht}\bigg)d\br{\phi}=0,
					\end{split}
				\end{equation*}
				which gives us $u^{r}(0,1)= 0.$ 
				
				\medskip
				\noindent 	
				\textbf{Case 3.} $3/4<r \le 1$.
				
				\medskip
				\noindent In this case, we split the domain $\Pi$ into
				\begin{equation*}
					\begin{split}
						A_{1}''&:=\lbrace(\br{r},\br{s})\in\Pi : r-\frac{1}{4}\leq\br{r}\leq r+2, 0\leq\br{s}\leq s+\frac{1}{4}\rbrace,\quad A_{2}'':=\Pi\setminus A_{1}'', \\
						A_{21}''&:=
						\lbrace(\br{r},\br{s})\in\Pi : 0\leq\br{r}< r-\frac{1}{4}, 0\leq\br{s}\leq s+\frac{1}{4}\rbrace\subset A_{2}'',\quad A_{22}'':=
						\lbrace(\br{r},\br{s})\in\Pi : \br{r}>r+2, 0\leq\br{s}\leq s+\frac{1}{4}\rbrace\subset A_{2}'',\\
						A_{23}''&:=
						\lbrace(\br{r},\br{s})\in\Pi : 0\leq\br{r}\leq r+2, \br{s}>s+\frac{1}{4}\rbrace\subset A_{2}'',\quad A_{24}':=
						\lbrace(\br{r},\br{s})\in\Pi : \br{r}>r+2, \br{s}>s+\frac{1}{4}\rbrace\subset A_{2}''.
					\end{split}
				\end{equation*}
				(i) $(\br{r},\br{s})\in A_{1}''$.
				
				\medskip
				\noindent We shall use the inequality $\bar{s} \lesssim (s \bar{s})^{1/2} + |s-\bar{s}|.$ For $(I)$, we have
				\begin{equation*}
					\begin{split}
						|(I)|&\lesssim\int_{0}^{\pi}\int_{0}^{\pi}\frac{\br{r}\br{s}}{X_{--}^{3/2}}d\br{\phi}d\br{\tht}\leq\int_{0}^{\pi}\int_{0}^{\pi}\frac{\br{r}((s \bar{s})^{1/2} + |s-\bar{s}|)}{X_{--}^{3/2}}d\br{\phi}d\br{\tht} \\
						& \lesssim 
						\int_{0}^{\pi}\frac{\br{r}}{(r-\br{r})^{2}+(s-\br{s})^{2}+2r\br{r}(1-\cos\br{\tht})}d\br{\tht} \lesssim\bigg(\frac{\br{r}}{r}\bigg)^{1/2}\frac{1}{[(r-\br{r})^{2}+(s-\br{s})^{2}]^{1/2}}\lesssim\frac{1}{[(r-\br{r})^{2}+(s-\br{s})^{2}]^{1/2}}.
					\end{split}
				\end{equation*}
				For $(II)$, we get
				\begin{equation*}
					\begin{split}
						|(II)|&\lesssim\int_{0}^{\pi}\int_{0}^{\pi}\frac{\br{r}[s\br{s}(1-\cos\br{\phi})]^{1/2}}{X_{--}
							^{3/2}}d\br{\phi}d\br{\tht}\lesssim
						\int_{0}^{\pi}\frac{\br{r}}{(r-\br{r})^{2}+(s-\br{s})^{2}+2r\br{r}(1-\cos\br{\tht})}d\br{\tht}\\
						&\lesssim\bigg(\frac{\br{r}}{r}\bigg)^{1/2}\frac{1}{[(r-\br{r})^{2}+(s-\br{s})^{2}]^{1/2}}\lesssim\frac{1}{[(r-\br{r})^{2}+(s-\br{s})^{2}]^{1/2}}.
					\end{split}
				\end{equation*} Hence, we obtain
				\begin{equation}\label{eq_case3A1Fr}
					\begin{split}
						\bigg|\iint_{A_{1}''}F^{r}(r,s,\br{r},\br{s})w(\br{r},\br{s})d\br{s}d\br{r}\bigg|&\lesssim\iint_{A_{1}''}\frac{|w(\br{r},\br{s})|}{[(r-\br{r})^{2}+(s-\br{s})^{2}]^{1/2}}d\br{s}d\br{r}\leq\nrm{w}_{L^{1}(A_{1}'')}^{1/2}\nrm{w}_{L^{\ift}(A_{1}'')}^{1/2}\lesssim\nrm{r^{2}w}_{L^{1}(A_{1}'')}^{1/2}\bigg\|\frac{w}{rs}\bigg\|_{L^{\ift}(A_{1}'')}^{1/2}.
					\end{split}
				\end{equation}
				(ii) $(\br{r},\br{s})\in A_{2}''$.
				
				\medskip
				\noindent Note that the term $ [(r-\br{r})^{2}+(s-\br{s})^{2}]^{1/2} $ has a constant lower bound: $	[(r-\br{r})^{2}+(s-\br{s})^{2}]^{1/2}\gtrsim1.$
				Then from the triangular inequality, we have
				\begin{equation*}
					(\br{r}^{2}+\br{s}^{2})^{1/2}\leq[(r-\br{r})^{2}+(s-\br{s})^{2}]^{1/2}+(\underbrace{r^{2}+s^{2}}_{=1})^{1/2}\lesssim[(r-\br{r})^{2}+(s-\br{s})^{2}]^{1/2}.
				\end{equation*}
				Using this, we repeat the process from \textbf{Case 1}-(ii) to get
				\begin{equation*}
					|F^{r}(r,s,\br{r},\br{s})|\lesssim\begin{cases}
						[(r-\br{r})^{2}+(s-\br{s})^{2}]^{-3}
						& \ \text{in} \ A_{21}'',\\
						[(r-\br{r})^{2}+(s-\br{s})^{2}]^{-2}
						& \ \text{in} \ A_{22}''\cup A_{23}'',\\
						[(r-\br{r})^{2}+(s-\br{s})^{2}]^{-1}
						& \ \text{in} \ A_{24}'',\\
					\end{cases}
				\end{equation*}
				which gives us
				\begin{equation}\label{eq_case3A2Fr}
					\begin{split}
						\bigg|\iint_{A_{2}''}F^{r}(r,s,\br{r},\br{s})w(\br{r},\br{s})d\br{s}d\br{r}\bigg|&\lesssim\iint_{A_{2}''}\frac{1}{(\br{r}-1)^{2}+(\br{s}-1)^{2}}\cdot(\br{r}\br{s})^{1/2}\cdot|w(\br{r},\br{s})|^{1/2}
						\cdot\frac{|w(\br{r},\br{s})|^{1/2}}{(\br{r}\br{s})^{1/2}}d\br{s}d\br{r}\\
						&\lesssim
						\nrm{rsw}_{L^{1}(A_{2}'')}^{1/2}\bigg\|\frac{w}{rs}\bigg\|_{L^{\ift}(A_{2}'')}^{1/2} {\leq[\nrm{r^{2}w}_{L^{1}(A_{2}'')}+\nrm{s^{2}w}_{L^{1}(A_{2}'')}]^{1/2}\bigg\|\frac{w}{rs}\bigg\|_{L^{\ift}(A_{2}'')}^{1/2}.}
					\end{split}
				\end{equation}
				Note that when $r = 1$ (which implies $s = 0$), the same argument goes through ($(II)=0$ in this case).

				\medskip
				
				\noindent 		Gathering \textbf{Case 1}--\textbf{Case 3}, we have shown that
				\begin{equation*}
					\sup_{\substack{(r,s)\in\Pi \\ r^{2}+s^{2}=1}}|u^{r}(r,s)|\lesssim[
					\nrm{r^{2}w}_{L^{1}(\Pi)}+\nrm{s^{2}w}_{L^{1}(\Pi)}]^{1/2}
					\cdot\bigg\|\frac{w}{rs}\bigg\|_{L^{\ift}(\Pi)}^{1/2}.
				\end{equation*}
				For $u^{s}$, since the kernel $F^{s}$ of $u^{s}$ has the similar form to the kernel $F^{r}$ of $u^{r}$ (i.e., $F^{s}(r,s,\br{r},\br{s})=F^{r}(s,r,\br{s},\br{r})$), we can repeat the same procedure to obtain the claimed bound for $u^{s}$. This finishes the proof. 
			\end{proof}
			
			\subsection{Proof of global regularity}\label{subsec_pfglob}
			Finally, we finish the proof of 
			Theorem \ref{thm_globreg}.
			
			\begin{proof}[Proof of Theorem \ref{thm_globreg}]
				We let $t\geq0$ and assume that $ w_{0} $ satisfies conditions from Theorem \ref{thm_localreg} and 
				$$ (1+r+s)
				\frac{w_{0}}{rs}\in L^{\ift}
				(\bbR^{4}),\quad (1+r^{2}+s^{2})\frac{w_{0}}{rs}\in L^{1}(\bbR^{4}). $$
				First, let us prove the estimate \eqref{eq_wtgrowth}. 
				To prove this, 
				by Lemma \ref{lem_wtest}, it suffices to show that the function $ L(t) $ 
				from \eqref{eq_Ltdef} satisfies
				\begin{equation}\label{eq_Ltexp}
					L(t)\lesssim_{w_{0}} e^{ct},
				\end{equation}
				for some constant $c>0$ having dependence on the initial data $w_{0}$.
				
				\medskip
				\noindent 
				To prove this, note that using Proposition \ref{prop_urLiftest}, its time derivative is estimated as
				\begin{equation*}
					\begin{split}
						L'(t)&=\nrm{u^{r}(t)}_{L^{\ift}(\bbR^{4})}+\nrm{u^{s}(t)}_{L^{\ift}(\bbR^{4})}\lesssim\bigg[
						\bigg\|\frac{rw(t)}{s}\bigg\|_{L^{1}(\bbR^{4})}+\bigg\|\frac{sw(t)}{r}\bigg\|_{L^{1}(\bbR^{4})}\bigg]^{1/2}
						\cdot\bigg\|\frac{w_{0}}{rs}\bigg\|_{L^{\ift}(\bbR^{4})}^{1/2},
					\end{split}
				\end{equation*}
				where we used the conservation of $\nrm{\frac{w}{rs}}_{L^{\ift}(\bbR^{4})}$ in time. Then using estimates from Lemma \ref{lem_wL1}, we obtain
				\begin{equation}\label{eq_Lprimeest}
					\begin{split}
						L'(t)&\lesssim\bigg[
						\bigg\|\frac{sw_{0}}{r}\bigg\|_{L^{1}(\bbR^{4})}+\bigg\|\frac{rw_{0}}{s}\bigg\|_{L^{1}(\bbR^{4})}
						+\bigg\|\frac{w_{0}}{rs}\bigg\|_{L^{1}(\bbR^{4})}\bigg]^{1/2}\cdot L(t)\cdot\bigg\|\frac{w_{0}}{rs}\bigg\|_{L^{\ift}(\bbR^{4})}^{1/2}
						\lesssim_{w_{0}}L(t).
					\end{split}
				\end{equation}
				Thus, 
				we have shown \eqref{eq_Ltexp}. Then combined with the estimate \eqref{eq_wtest} from Lemma \ref{lem_wtest}, we have the estimate of the term $ \nrm{w(t)}_{L^{\ift}(\bbR^{4})} $:\begin{equation}\label{eq_wtexp}
					\nrm{w(t)}_{L^{\ift}(\bbR^{4})}\lesssim_{w_{0}}e^{ct}.
				\end{equation}
				
				
				\noindent Now it is left to show that the $ L^{4,1} $-norm of the term $ (1+r+s+rs)w(t)/(rs) $ is bounded for any $ t\geq0 $. To begin with, the boundedness of the term $ \nrm{w(t)/(rs)}_{L^{4,1}(\bbR^{4})} $ is immediate from the conservation of $ w(t)/(rs) $ along the flow map:
				\begin{equation}\label{eq_woverrsL4,1}
					\bigg\|\frac{w(t)}{rs}\bigg\|_{L^{4,1}(\bbR^{4})}=\bigg\|\frac{w_{0}}{rs}\bigg\|_{L^{4,1}(\bbR^{4})}.
				\end{equation}
				Next, note that by the estimate \eqref{eq_Ltexp} above, estimates \eqref{eq_rwoversL1est} and \eqref{eq_swoverrL1est} from Lemma \ref{lem_wL1}, and the estimate \eqref{eq_urLiftest} from Proposition \ref{prop_urLiftest}, we have exponential estimates of terms $ \nrm{u^{r}(t)}_{L^{\ift}(\bbR^{4})} $ and $ \nrm{u^{s}(t)}_{L^{\ift}(\bbR^{4})} $ as well:
				\begin{equation}\label{eq_urusexp}
					\nrm{u^{r}(t)}_{L^{\ift}(\bbR^{4})}, \nrm{u^{s}(t)}_{L^{\ift}(\bbR^{4})}\lesssim_{w_{0}}e^{ct}.
				\end{equation}
				Now observe that the right-hand sides of equations \eqref{eq_woverr} and \eqref{eq_wovers} for $ w/r $ and $ w/s $ are $ u^{s}w/(rs) $ and $ u^{r}w/(rs) $, respectively. Hence, by combining the estimate \eqref{eq_urusexp} and the conservation \eqref{eq_woverrsL4,1}, we have exponential bounds of terms $ \nrm{w(t)/r}_{L^{4,1}(\bbR^{4})} $ and $ \nrm{w(t)/s}_{L^{4,1}(\bbR^{4})} $:
				\begin{equation}\label{eq_woverr,woversL4,1}
					\bigg\|\frac{w(t)}{r}\bigg\|_{L^{4,1}(\bbR^{4})}, \bigg\|\frac{w(t)}{s}\bigg\|_{L^{4,1}(\bbR^{4})}\lesssim_{w_{0}}e^{ct}.
				\end{equation}
				Finally, plugging in the exponential estimate \eqref{eq_woverr,woversL4,1} in the inequality \eqref{eq_wL4,1}, we obtain an double-exponential estimate of the term $ \nrm{w(t)}_{L^{4,1}(\bbR^{4})} $:
				\begin{equation}\label{eq_wL4,1doubexp}
					\nrm{w(t)}_{L^{4,1}(\bbR^{4})}\lesssim_{w_{0}}e^{e^{ct}}.
				\end{equation}
				This finishes the proof.
			\end{proof}

			\subsection*{Acknowledgments}
			
			KC has been supported by the National Research Foundation of Korea(NRF) grant funded by the Korea government(MSIT)(grant No. 2022R1A4A1032094, RS-2023-00274499). IJ and DL have been supported by the Samsung Science and Technology Foundation under Project Number SSTF-BA2002-04.
			
			\appendix
			
			\section*{Appendix}

			\section{Vorticity computation}\label{seca:vorticity}
			
			In this section, we assume that the solution of the 4D Euler equations satisfy the bi-rotational symmetry, and compute the vorticity tensor $\omg$ in terms of $w, u^{\tht},$ and $u^{\phi}$. The following formulas will be used: we have \begin{equation}  \label{eq:r-to-x} 
				\begin{aligned}
					\rd_{x_1} &= \cos\tht\rd_{r} - \sin\tht \frac{\rd_{\tht}}{r}, \quad \rd_{x_2} = \sin\tht\rd_{r} + \cos\tht \frac{\rd_{\tht}}{r}, \quad  \rd_{x_3} = \cos\phi\rd_{s} - \sin\phi \frac{\rd_{\phi}}{s}, \quad \rd_{x_4} = \sin\phi\rd_{s} + \cos\phi \frac{\rd_{\phi}}{s},
				\end{aligned} 
			\end{equation} and \begin{equation}\label{eq:u-to-v}
				\begin{split}
					u^1 & = \cos\tht u^{r} - \sin\tht u^{\tht}, \quad u^2 = \sin\tht u^{r} + \cos\tht u^{\tht}, \quad u^3 = \cos\phi u^{s} - \sin\phi u^{\phi}, \quad u^4  = \sin\phi u^{s} + \cos\phi u^{\phi}.
				\end{split}
			\end{equation} Then, we can compute that \begin{equation*}
				\begin{split}
					\omg^{1,2} &= (\sin\tht \rd_{r} + \cos\tht \frac{\rd_{\tht}}{r})(\cos\tht u^{r} - \sin\tht u^{\tht}) - (\cos\tht \rd_{r}- \sin\tht \frac{\rd_{\tht}}{r})(\sin\tht u^{r} + \cos\tht u^{\tht}) =-\rd_{r} u^{\tht} - \frac{u^{\tht}}{r}
				\end{split}
			\end{equation*} and similarly \begin{equation*}
				\begin{split}
					\omg^{3,4} = -\rd_{s} u^{\phi} - \frac{u^{\phi}}{s}.
				\end{split}
			\end{equation*} Next, \begin{equation}\label{eq_vorttensor}
				\begin{split}
					\omg^{1,3} & = -\cos\tht \cos\phi w - \sin\tht \cos\tht \rd_{s}u^{\tht} + \cos\tht \sin\phi \rd_{r}u^{\phi},\\
					\omg^{1,4} & = -\cos\tht \sin\phi w - \sin\tht \sin\phi \rd_{s}u^{\tht} - \cos\tht \cos\phi \rd_{r}u^{\phi},\\
					\omg^{2,3} & = -\sin\tht \cos\phi w + \cos\tht \cos\phi \rd_{s}u^{\tht} + \sin\tht \sin\phi \rd_{r}u^{\phi},\\
					\omg^{2,4} & = -\sin\tht \sin\phi w + \cos\tht \sin\phi \rd_{s}u^{\tht} - \sin\tht \cos\phi \rd_{r}u^{\phi}. 
				\end{split}
			\end{equation} One sees that in the no-swirl case, every component of the vorticity tensor is a multiple of $w$ with coefficient depending only on $\tht$ and $\phi$. In particular when $w \in L^p$ (or in $L^{p,q}$), we have automatically $\omg \in L^p$ (in $L^{p,q}$, respectively).
			\medskip
			
			\noindent Now we provide a proof of the fact that quantities $ w/r $, $ w/s $, and $ w/(rs) $ are uniformly bounded in $ \bbR^{4} $ if their corresponding velocity $ u $ with bi-rotational symmetry without swirl is sufficiently regular. Recall that the initial data $u_{0}$ mentioned in Remark \ref{rmk_globaldata} is in $H^{s}(\bbR^{4})$ for some $s>5$. Then by the Sobolev embedding theorem, $u_{0}$ is in $C^{3}(\bbR^{4})$, that is, its derivatives are continuous and uniformly bounded in $\bbR^{4}$ up to order 3. This is the type of velocity that is considered in the proposition below. \begin{prop}\label{prop_Liftwoverrs}
					Let $ u\in C^{3}(\bbR^{4}) $ satisfy the bi-rotational symmetry \eqref{eq_bi-rotational} and the no-swirl condition \eqref{eq_noswirl}. Then we have
					\begin{equation}\label{eq_Liftwoverrs}
						(1+r+s)\frac{w}{rs}\in L^{\ift}(\bbR^{4}).
					\end{equation}
			\end{prop}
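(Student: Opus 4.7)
The plan is to lift the problem from the half-plane $\Pi$ to Cartesian coordinates on $\bbR^{4}$, where $w$ appears as (minus) the single Cartesian vorticity-tensor component $\omg^{1,3}$ evaluated on a special slice. Under no-swirl this component vanishes on two transversal coordinate hyperplanes, so a double Hadamard lemma will convert the axis-vanishing into simultaneous global boundedness of $w/(rs)$, $w/r$, and $w/s$.

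First I would identify $w$ with a Cartesian component along the slice $\{(r,0,s,0):r,s\ge 0\}$. On this slice the bi-polar angles satisfy $\tht=\phi=0$, so \eqref{eq:u-to-v} gives $u^{1}(r,0,s,0)=u^{r}(r,s)$ and $u^{3}(r,0,s,0)=u^{s}(r,s)$, and a direct chain-rule computation yields
\begin{equation*}
    w(r,s)=\rd_{r}u^{s}-\rd_{s}u^{r}=-\,\omg^{1,3}(r,0,s,0).
\end{equation*}
Next I would observe, from the no-swirl form of \eqref{eq_vorttensor}, that
\begin{equation*}
    \omg^{1,3}(x)=-\frac{x_{1}x_{3}}{rs}\,w(r,s)\qquad(r,s>0),
\end{equation*}
which plainly vanishes whenever $x_{1}=0$ or $x_{3}=0$. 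At the rotational axes (where this expression is formally $0/0$), the smoothness of $\omg^{1,3}$ as a Cartesian tensor forces the limit $\lim_{r\to 0}\cos\tht\cos\phi\, w(r,s)$ to be independent of the angle $\tht$, hence $w(0,s)=0$, and likewise $w(r,0)=0$. By continuity $\omg^{1,3}$ therefore vanishes on the full hyperplanes $\{x_{1}=0\}$ and $\{x_{3}=0\}$ in $\bbR^{4}$.

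The main step is a double Hadamard lemma. Since $u\in C^{3}(\bbR^{4})$, $\omg^{1,3}\in C^{2}$; in the setting of Remark \ref{rmk_globaldata} where $u\in H^{s}(\bbR^{4})$ with $s>5$, Sobolev embedding upgrades this to global $L^{\infty}$ bounds on $\rd_{x_{1}}\omg^{1,3}$, $\rd_{x_{3}}\omg^{1,3}$, and $\rd_{x_{1}}\rd_{x_{3}}\omg^{1,3}$. Using the vanishing at $x_{1}=0$, the fundamental theorem of calculus gives
\begin{equation*}
    \frac{\omg^{1,3}(x)}{x_{1}}=\int_{0}^{1}\rd_{x_{1}}\omg^{1,3}(tx_{1},x_{2},x_{3},x_{4})\,dt\in L^{\infty}(\bbR^{4}),
\end{equation*}
and by symmetry $\omg^{1,3}/x_{3}\in L^{\infty}(\bbR^{4})$. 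Since the quotient $\omg^{1,3}/x_{1}$ still inherits the vanishing at $x_{3}=0$, a second application yields
\begin{equation*}
    \frac{\omg^{1,3}(x)}{x_{1}x_{3}}=\int_{0}^{1}\!\!\int_{0}^{1}\rd_{x_{1}}\rd_{x_{3}}\omg^{1,3}(tx_{1},x_{2},\tau x_{3},x_{4})\,d\tau\,dt\in L^{\infty}(\bbR^{4}).
\end{equation*}
Evaluating each of these three bounded quantities on the slice $(r,0,s,0)$ (so $x_{1}=r$, $x_{3}=s$) and recalling the first step, I obtain that $w/r$, $w/s$, and $w/(rs)$ all lie in $L^{\infty}(\bbR^{4})$, which is exactly the claim $(1+r+s)w/(rs)\in L^{\infty}(\bbR^{4})$.

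The hard part will be converting the formal $C^{3}$ hypothesis into the global $L^{\infty}$ bounds on third derivatives of $u$ that the Hadamard integrals require: bare $C^{3}$ on $\bbR^{4}$ does not provide this, so the proof has to use (or the hypothesis has to be tacitly strengthened to) the decay built into the $H^{s}$ framework of Remark \ref{rmk_globaldata}. A secondary subtlety is the boundary argument at the degenerate rotational axes $\{r=0\}$ and $\{s=0\}$: the identity $\omg^{1,3}=-(x_{1}x_{3}/(rs))w$ is only formal there, and the vanishing of $w$ on these axes must be deduced from angular independence of the smooth tensor $\omg^{1,3}$ before the Hadamard step can be executed.
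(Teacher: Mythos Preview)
Your approach is essentially the same as the paper's: identify $w$ with $-\omg^{1,3}$ on the slice $\{\tht=\phi=0\}$, show that $\omg^{1,3}$ vanishes on the hyperplanes $\{x_{1}=0\}$ and $\{x_{3}=0\}$, and then use $C^{2}$ regularity of $\omg^{1,3}$ to divide by $x_{1}x_{3}$. The only technical variation is that you use the integral Hadamard representation while the paper uses a double difference-quotient argument converging to the mixed partial $\rd_{x_{1}}\rd_{x_{3}}\omg^{1,3}(0,x_{2},0,x_{4})$; your version is arguably cleaner since it yields the global $L^{\infty}$ bound $\lvert \omg^{1,3}/(x_{1}x_{3})\rvert\le\nrm{\rd_{x_{1}}\rd_{x_{3}}\omg^{1,3}}_{L^{\infty}}$ in one stroke.

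Your ``hard part'' worry about bare $C^{3}$ not giving global $L^{\infty}$ derivative bounds is unnecessary here: the paper states immediately before the proposition that by $u\in C^{3}(\bbR^{4})$ it means derivatives are continuous \emph{and uniformly bounded} up to order $3$, so the hypothesis already carries the bounds your Hadamard integrals need.
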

			\begin{proof}
				The uniformly boundedness of $ w/r $ and $ w/s $ are immediate from the regularity and the bi-rotational symmetry of $ u $, so it suffices to prove $ w/(rs)\in L^{\ift}(\bbR^{4}) $.
				\medskip
				
				\noindent First, note from \eqref{eq_vorttensor} that the bi-rotational flow without swirl satisfies
				\begin{equation}\label{eq_omg13w}
					\omg^{1,3}(x_{1},x_{2},x_{3},x_{4})=-\cos\tht\cos\phi w(r,s),\quad x\in\bbR^{4}.
				\end{equation}
				Then note that if $x_{1}=0$, then we have either $\cos\tht=0$ or $r=0$, where $r=0$ implies $w(r,s)=0$ because of the bi-rotational symmetry of $u$ and the regularity of $w$. Thus, we have
				$$ \omg^{1,3}(0,x_{2},x_{3},x_{4})=0,\quad x_{2},x_{3},x_{4}\in \bbR. $$
				Likewise, we also have
				$$ \omg^{1,3}(x_{1},x_{2},0,x_{4})=0,\quad x_{1},x_{2},x_{4}\in \bbR. $$
				In addition, note that from $u\in C^{3}(\bbR^{4})$, we get $\omg^{1,3}\in C^{2}(\bbR^{4})$. Then for each $x_{2},x_{4}\in \bbR$, 
				the term
				\begin{equation*}
					\begin{split}
						\frac{\omg^{1,3}(x_{1},x_{2},x_{3},x_{4})}{x_{1}x_{3}}&=\frac{1}{x_{3}}\cdot\bigg(\frac{\omg^{1,3}(x_{1},x_{2},x_{3},x_{4})-\omg^{1,3}(0,x_{2},x_{3},x_{4})}{x_{1}}-\frac{\omg^{1,3}(x_{1},x_{2},0,x_{4})-\omg^{1,3}(0,x_{2},0,x_{4})}{x_{1}}\bigg)\\
						&=\frac{1}{x_{1}}\cdot\bigg(\frac{\omg^{1,3}(x_{1},x_{2},x_{3},x_{4})-\omg^{1,3}(x_{1},x_{2},0,x_{4})}{x_{3}}-\frac{\omg^{1,3}(0,x_{2},x_{3},x_{4})-\omg^{1,3}(0,x_{2},0,x_{4})}{x_{3}}\bigg)
					\end{split}
				\end{equation*}
				converges to $\rd_{x_{3}x_{1}}\omg^{1,3}(0,x_{2},0,x_{4})=\rd_{x_{1}x_{3}}\omg^{1,3}(0,x_{2},0,x_{4})<\ift$ as $x_{1}\to0$ and $x_{3}\to0$. From this, 
				we have $\omg^{1,3}/(x_{1}x_{3})\in L^{\ift}(\bbR^{4})$. Finally, by the relation
				$$ \frac{\omg^{1,3}(x_{1},x_{2},x_{3},x_{4})}{x_{1}x_{3}}=-\frac{w(r,s)}{rs},\quad x_{1}, x_{3}\neq0, $$
				we obtain
				$w/(rs)\in L^{\ift}(\bbR^{4})$.
		\end{proof}

			\bibliographystyle{plain}
			\bibliography{biblography_CJL_240219}
			
		\end{document}